\newcommand{\harxiv}[1]{ \href{http://arxiv.org/abs/#1}{\texttt{arXiv:#1}}}
\newcommand{\hyref}[2]{ \hyperref[#2]{#1~\ref*{#2}} }
\theoremstyle{plain}
\newtheorem{theorem}{Theorem}[section]
\newtheorem{lemma}[theorem]{Lemma}
\newtheorem{corollary}[theorem]{Corollary}
\newtheorem{proposition}[theorem]{Proposition}
\newtheorem{introtheorem}{Theorem}
\theoremstyle{definition}
\newtheorem{remark}[theorem]{Remark}
\newtheorem{example}[theorem]{Example}
\newtheorem{definition}[theorem]{Definition}
\newtheorem{setup}[theorem]{Setup}
\newcommand{\sA}{\mathsf{A}}
\newcommand{\sB}{\mathsf{B}}
\newcommand{\sC}{\mathsf{C}}
\newcommand{\sD}{\mathsf{D}}
\newcommand{\sH}{\mathsf{H}}
\newcommand{\sK}{\mathsf{K}}
\newcommand{\sS}{\mathsf{S}}
\newcommand{\sT}{\mathsf{T}}
\newcommand{\sU}{\mathsf{U}}
\newcommand{\sV}{\mathsf{V}}
\DeclareMathAlphabet{\mathpzc}{OT1}{pzc}{m}{it}
\newcommand{\bE}{\mathbb{E}}
\newcommand{\fs}{\mathfrak{s}}
\newcommand{\cF}{\mathcal{F}}
\newcommand{\cT}{\mathcal{T}}
\newcommand{\cW}{\mathcal{W}}
\newcommand{\cX}{\mathcal{X}}
\newcommand{\cY}{\mathcal{Y}}
\newcommand{\Db}{\sD^b}
\newcommand{\Kb}{\sK^b}
\newcommand{\Ab}{\mathsf{Ab}}
\newcommand{\proj}{\mathsf{proj}}
\renewcommand{\mod}{\mathsf{mod}\,}
\DeclareMathOperator{\Mod}{\mathsf{Mod}}
\DeclareMathOperator{\im}{\mathrm{im}}
\newcommand{\kk}{\mathbf{k}}
\DeclareMathOperator{\Hom}{\mathrm{Hom}}
\DeclareMathOperator{\End}{\mathrm{End}}
\DeclareMathOperator{\add}{\mathsf{add}}
\DeclareMathOperator{\thick}{\mathsf{thick}}
\newcommand{\orth}{{}^\perp}
\newcommand{\op}{{}^{\rm op}}
\newcommand{\too}{\longrightarrow}
\newcommand{\rightlabel}[1]{\stackrel{#1}{\too}}
\newcommand{\rightiso}{\stackrel{\sim}{\too}}
\newcommand{\longmapsfrom}{\longleftarrow\!\shortmid}
\renewcommand{\longmapsto}{\shortmid\!\longrightarrow}
\newcommand{\coloneqq}{:=}
\newcommand{\bij}{\stackrel{1-1}{\longleftrightarrow}}
\renewcommand{\phi}{\varphi}
\renewcommand{\epsilon}{\varepsilon}
\begin{document}

\title{Co-t-structures, cotilting and cotorsion pairs}

\author{David Pauksztello} 
\address{Department of Mathematics and Statistics, Lancaster University, Lancaster, LA1 4YF, United Kingdom.}
\email{d.pauksztello@lancaster.ac.uk}

\author{Alexandra Zvonareva}
\address{Institut f\"ur Algebra und Zahlentheorie, Universit\"at Stuttgart, Pfaffenwaldring 57, 70569 Stuttgart, Germany.}
\email{alexandrazvonareva@gmail.com}

%\keywords{co-t-structure, cotorsion pair, extriangulated category}

%\subjclass[2010]{18E30, 16G10, 18E40}
%16G10: Representations of Artinian rings
%18E30: Derived categories, triangulated categories
%18E40: Torsion theories, radicals

\begin{abstract}
Let $\sT$ be a
%n idempotent complete 
triangulated category with shift functor $\Sigma \colon \sT \to \sT$.
Suppose $(\sA,\sB)$ is a co-t-structure with coheart $\sS = \Sigma \sA \cap \sB$ and extended coheart $\sC = \Sigma^2 \sA \cap \sB = \sS* \Sigma \sS$, which is an extriangulated category. We show that there is a bijection between co-t-structures $(\sA',\sB')$ in $\sT$ such that $\sA \subseteq \sA' \subseteq \Sigma \sA$ and complete cotorsion pairs in the extended coheart $\sC$.
In the case that $\sT$ is Hom-finite, $\kk$-linear and Krull-Schmidt, we show further that there is a bijection between complete cotorsion pairs in $\sC$ and functorially finite torsion pairs in $\mod \sS$.
\end{abstract}

\maketitle

%{\small
%\setcounter{tocdepth}{1}
%\tableofcontents
%}

%============================================================================
% Introduction
%\addtocontents{toc}{\protect{\setcounter{tocdepth}{-1}}}  % No toc entry for Introduction
\section*{Introduction} 
%\addtocontents{toc}{\protect{\setcounter{tocdepth}{1}}}   % but enable toc entries for other sections
%============================================================================

Happel-Reiten-Smal\o\ (HRS) tilting was introduced in \cite{HRS} as a method to construct new t-structures from torsion pairs in the heart of a given t-structure. 
%Let us explicitly describe HRS tilting. 
Suppose $\sT$ is a triangulated category with shift functor $\Sigma \colon \sT \to \sT$,
and $(\sU,\sV)$ is a t-structure in $\sT$ with the heart $\sH = \sU \cap \Sigma \sV$.
Given a torsion pair $(\cT,\cF)$ in $\sH$ the HRS tilt of $(\sU,\sV)$ at $(\cT,\cF)$ is the t-structure
\[
(\sU',\sV') \coloneqq  (\Sigma \sU * \cT, \cF * \sV).
\]
%Here $\sA * \sB= \{t\mid $ there exists a triangle $a \to t \to b \to \Sigma a$ with $a \in \sA$ and $b \in \sB\}$. 
%We say that $(\sU',\sV')$ is a \emph{left HRS tilt of $(\sU,\sV)$ at $(\cT,\cF)$}.
%
In addition to providing a method for constructing new t-structures from old, HRS tilting gives all t-structures that are `sufficiently close' to the initial one; see \cite{BR,Pol,Woolf}. Explicitly, there is  a bijection:
\begin{equation} \label{hrs}
\{
\text{t-structures } (\sU',\sV') \text{ with } \Sigma \sU \subseteq \sU' \subseteq \sU
\}
\bij
\{
\text{torsion pairs } (\cT,\cF) \text{ in } \sH
\}.
\end{equation}

Such t-structures $ (\sU',\sV')$ are often called \emph{intermediate with respect to $ (\sU,\sV)$}. 

HRS tilting has many applications in representation theory and algebraic geometry.
For example, it provides a method for constructing derived equivalences between abelian categories in cases where explicit tilting objects are not available. In this context, HRS tilting was used to study derived equivalences for smooth compact analytic surfaces with no curves \cite{BVdB} or for K3 surfaces  \cite{Br,Hub}. 
Recently, HRS tilting has been extensively used in the study of Bridgeland stability conditions \cite{Br,PSZ,QW,Woolf}.

A \emph{co-t-structure} in $\sT$ consists of a pair of full subcategories $(\sA,\sB)$ of $\sT$ which are closed under direct summands, such that $\Sigma^{-1} \sA \subseteq \sA$, $\sT(a,b) = 0$, and $\sT = \sA * \sB$ \cite{Bondarko,Pauk}; note that in \cite{Bondarko} they are called weight structures. The subcategory $\sS = \Sigma \sA \cap \sB$ is called the \emph{coheart}; it is a \emph{presilting subcategory} of $\sT$, see Section~\ref{sec:background}.
Since their introduction, co-t-structures have acquired an important role in representation theory in connection with silting theory and $\tau$-tilting theory \cite{AIR,AI,IJY,KV,KY,MSSS}; for surveys of recent results see \cite{Angeleri,Jorg-co-t}.
%Indeed through this link
At first sight, the definitions of t-structure and co-t-structure appear very similar and there are, indeed, a number of parallels between the two theories. However, t-structures and co-t-structures are not dual to each other in a mathematical sense and there are notable differences between them, with the most basic being the failure, of abelianness of the coheart. 
%Moreover, co-t-structures seem to provide the better context for representation theory: for example, co-t-structures pick out the `algebraic hearts', while t-structures will pick out hearts of non-algebraic origin such as coherent sheaves on $\bP^1$ in the case of the bounded derived category of the Kronecker algebra; see \cite{KY}.

The main result of this note is an analogue of bijection \eqref{hrs} for co-t-structures. A priori it is not clear what are the co-t-structure counterparts of the HRS tilting procedure and the torsion pair in the heart. 
%in order to obtain bijection \eqref{hrs} for co-t-structures.
The recent introduction of extriangulated categories in \cite{NP} provides the right context.
If $(\sA,\sB)$ is a co-t-structure in $\sT$ with coheart $\sS$, then $\sC = \Sigma^2\sA \cap \sB =\sS * \Sigma \sS$, which we call the \emph{extended coheart}, is an extriangulated category, in which there is a notion of a \emph{complete cotorsion pair} \cite{Hovey,NP,Salce}.
A particular example of the extended coheart appears in the context of Amiot cluster categories in the guise of the fundamental domain of the cluster category \cite{Amiot}. 

\begin{introtheorem}[Theorem~\ref{thm:co-hrs}] \label{intro:co-hrs}
Suppose $\sT$ is a
%n essentially small, idempotent complete 
triangulated category,
$(\sA,\sB)$ is a co-t-structure in $\sT$, and $\sC$ is its extended coheart. There is a bijection
\[
\{\text{co-t-structures } (\sA',\sB') \text{ with } \sA \subseteq \sA' \subseteq \Sigma \sA\}
\bij
\{\text{complete cotorsion pairs } (\cX,\cY) \text{ in } \sC\}.
\]
\end{introtheorem}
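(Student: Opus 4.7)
The plan is to exhibit explicit, mutually inverse maps. In one direction, I send an intermediate co-t-structure $(\sA', \sB')$ to
\[
\cX \coloneqq \Sigma\sA' \cap \sB, \qquad \cY \coloneqq \sB' \cap \Sigma^2\sA,
\]
both subcategories of $\sC$ that contain $\sS$ and $\Sigma\sS$ respectively by shifting the defining inclusions $\sA \subseteq \sA' \subseteq \Sigma\sA$ and $\Sigma\sB \subseteq \sB' \subseteq \sB$. In the other direction, I send a complete cotorsion pair $(\cX, \cY)$ in $\sC$ to
\[
\sA' \coloneqq \sA * \Sigma^{-1}\cX, \qquad \sB' \coloneqq \cY * \Sigma\sB,
\]
and verify $\sA \subseteq \sA' \subseteq \Sigma\sA$ and $\Sigma\sB \subseteq \sB' \subseteq \sB$ using that $\Sigma^{-1}\cX \subseteq \Sigma\sA$ and that $\Sigma\sA$, $\sB$ are extension-closed.

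To verify that the forward map produces a complete cotorsion pair, orthogonality is immediate:
\[
\Ext^1_\sC(\cX, \cY) = \sT(\cX, \Sigma\cY) \subseteq \sT(\Sigma\sA', \Sigma\sB') = \sT(\sA', \sB') = 0.
\]
For approximations of $c \in \sC$, the $(\sA', \sB')$-decomposition $a' \to c \to b' \to \Sigma a'$ rotates to the triangle $c \to b' \to \Sigma a'$, in which $b' \in \cY$ and $\Sigma a' \in \cX$ follow from extension-closure of $\Sigma^2\sA$ and of $\sB$ respectively; this is the $\cY$-envelope. The decomposition $\sC = \sS * \Sigma\sS$ supplies the $\cX$-precover $s \to c \to \Sigma s'$ with $s \in \sS \subseteq \cX$ and $\Sigma s' \in \Sigma\sS \subseteq \cY$.

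The main obstacle will be showing the backward map yields a co-t-structure. Orthogonality $\sT(\sA', \sB') = 0$ reduces to four cross-terms; three vanish by shifting $\sT(\sA, \sB) = 0$ using closure of $\sA$ under $\Sigma^{-1}$ and of $\sB$ under $\Sigma$, and the crucial middle term $\sT(\Sigma^{-1}\cX, \cY) = \Ext^1_\sC(\cX, \cY)$ vanishes by the cotorsion pair axiom. The technical heart is the decomposition $\sT = \sA' * \sB'$. For $t \in \sT$ I will compose three triangles: the $(\sA, \sB)$-decomposition $a \to t \to b$ of $t$; a triangle $c \to b \to \Sigma^2 b_2$ with $c \in \sC$, obtained by shifting the $(\sA, \sB)$-decomposition of $\Sigma^{-2}b$ and checking the fiber $\Sigma^2 a_2$ lies in $\Sigma^2\sA \cap \sB = \sC$; and the rotated $\cY$-envelope $\Sigma^{-1}x' \to c \to y'$ of $c$. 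Two successive octahedra, applied first to the composition $\Sigma^{-1}x' \to c \to b$ (producing $\Sigma^{-1}x' \to b \to u$ with $u \in \cY * \Sigma^2\sB \subseteq \sB'$) and then to $t \to b \to u$ (producing a link $\Sigma a \to F \to x'$ so that $F \in \Sigma\sA * \cX$), assemble the required $\Sigma^{-1}F \to t \to u$ with $\Sigma^{-1}F \in \sA * \Sigma^{-1}\cX = \sA'$ and $u \in \sB'$. Closure under summands then follows formally from uniqueness of this decomposition together with $\sT(\sA',\sB') = 0$.

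For mutual inversion, the nontrivial identity is $\Sigma\sA' \cap \sB = \cX$ when starting from a cotorsion pair. For $z$ in the left-hand side, use $\Sigma\sA' = \Sigma\sA * \cX$ to write $\sigma \to z \to x$ with $\sigma \in \Sigma\sA$ and $x \in \cX$, decompose $\sigma$ via $(\sA,\sB)$ as $a_1 \to \sigma \to s_1$, and run an octahedron: the link $s_1 \to w \to x$ gives $w \in \cX$, using the automatic containment $s_1 \in \sS \subseteq \cX$ (which holds because $\sT(\sS, \Sigma\sC) \subseteq \sT(\Sigma\sA, \Sigma\sB) = 0$) together with extension-closure of $\cX$, and then the remaining triangle $a_1 \to z \to w$ has its left map in $\sT(a_1, z) \subseteq \sT(\sA,\sB) = 0$, forcing the triangle to split and $z$ to be a summand of $w \in \cX$. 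The other identity $\sA * \Sigma^{-1}\cX = \sA'$ is easier: for $x \in \sA'$, decompose $\Sigma x$ via $(\sA,\sB)$ as $a' \to \Sigma x \to b'$, observe $b' \in \Sigma\sA' \cap \sB = \cX$ by extension-closure of $\Sigma\sA'$, and shift back to obtain $\Sigma^{-1}a' \to x \to \Sigma^{-1}b'$ in the required form.
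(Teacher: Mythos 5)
Your overall strategy coincides with the paper's: the same two assignments $(\sA',\sB')\mapsto(\sB\cap\Sigma\sA',\,\sB'\cap\Sigma^2\sA)$ and $(\cX,\cY)\mapsto(\sA*\Sigma^{-1}\cX,\,\cY*\Sigma\sB)$, the same octahedral construction of the $(\sA',\sB')$-decomposition of an arbitrary $t$, and a comparable mutual-inversion check. Your verification of condition \ref{E-triangle-CY} is in fact cleaner than the paper's: rotating the $(\sA',\sB')$-decomposition of $c$ itself and using extension-closure of $\Sigma^2\sA$ and $\sB$ works, whereas the paper reaches the same triangle through two octahedra.

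However, your verification of condition \ref{E-triangle-XC} has a genuine gap. That condition demands an $\bE$-triangle $y\to x\to c$ with $y\in\cY$, $x\in\cX$, i.e.\ a distinguished triangle $y\to x\to c\to\Sigma y$ in which $c$ is the \emph{third} term (the map $x\to c$ being a right $\cX$-approximation). The triangle $s\to c\to\Sigma s'\to\Sigma s$ coming from $\sC=\sS*\Sigma\sS$ has $c$ in the \emph{middle}; it only witnesses $c\in\cX*\cY$, which is not what is required. Nor can you simply rotate it to $s'\to s\to c\to\Sigma s'$: the first term $s'$ lies in $\sS$, and $\sS\not\subseteq\cY=\sB'\cap\Sigma^2\sA$ in general (e.g.\ for $(\sA',\sB')=(\Sigma\sA,\Sigma\sB)$ one has $\cY=\Sigma\sS$). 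The fix is the exact analogue of your condition-\ref{E-triangle-CY} argument applied one shift down: take the $(\sA',\sB')$-decomposition $a'\to\Sigma^{-1}c\to b'\to\Sigma a'$ and rotate to $b'\to\Sigma a'\to c\to\Sigma b'$; then $\Sigma a'\in\Sigma\sA'\cap\sB=\cX$ and $b'\in\sB'\cap\Sigma^2\sA=\cY$ by the same extension-closure arguments. (The paper instead produces this triangle by an octahedron involving an $(\sA',\sB')$-decomposition of the object $b$.) Separately, your closing claim that closure of $\sA*\Sigma^{-1}\cX$ and $\cY*\Sigma\sB$ under direct summands ``follows formally from uniqueness of this decomposition'' is not an argument --- co-t-structure decompositions are not unique. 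What the orthogonality plus the decomposition $\sT=\sA'*\sB'$ formally give is that $\add(\sA')={}^{\perp}\sB'$ and $\add(\sB')=\sA'{}^{\perp}$ form a co-t-structure; this is why the paper defines the inverse map with $\add(-)$ (and notes that in the Krull--Schmidt case the $\add$ is superfluous by \cite[Proposition 2.1]{IY}). You should either insert the $\add$ or restrict to the Krull--Schmidt setting.
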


Let $A$ be a finite-dimensional $\kk$-algebra. From \cite{KY}, there are bijections between the following objects:
\begin{itemize}
\item silting subcategories, $\sS$, in $\Kb(\proj(A))$; 
\item bounded co-t-structures, $(\sA_\sS,\sB_\sS)$, in $\Kb(\proj(A))$;
\item \emph{algebraic t-structures}, i.e. bounded t-structures, $(\sU_\sS,\sV_\sS)$, in $\Db(A)$ with length heart.% and,
%\item simple-minded collections in $\Db(A)$.
\end{itemize}
Fixing a silting subcategory $\sS = \add(s)$ for a silting object $s$, by \cite{IJY} these bijections restrict to bijections between 
\begin{itemize}
\item algebraic t-structures intermediate with respect to $(\sU_\sS,\sV_\sS)$;
\item bounded co-t-structures intermediate with respect to $(\sA_\sS,\sB_\sS)$; and,
\item silting subcategories $\sS'$ with $\sS' \subseteq \sS * \Sigma \sS$.
\end{itemize}
%Finally, HRS tilting and support $\tau$-tilting theory \cite{AIR,IJY} add bijections with
%\begin{itemize}
%\item functorially finite torsion pairs in $\mod \End(s)\op \simeq \mod \sS$; and,
%\item support $\tau$-tilting modules in $\mod \End(s)\op \simeq \mod \sS$,
%\end{itemize}
Finally, HRS tilting and support $\tau$-tilting theory \cite{AIR,IJY} adds a bijection with
\begin{itemize}
\item functorially finite torsion pairs in $\mod \End(s)\op \simeq \mod \sS$
%\item support $\tau$-tilting modules in $\mod \End(s)\op \simeq \mod \sS$,
\end{itemize}
into the mix. 
Theorem \ref{intro:co-hrs} completes the picture with the co-t-structure version of torsion pairs: cotorsion pairs in $\sS*\Sigma \sS$.
Moreover, working with co-t-structures and cotorsion pairs seems to provide a more convenient context for representation theory: one does not have to care about the additional restriction on the t-structure having a length heart, which may be difficult to check in practice, cf. \cite{CSPP}.

Our second result provides a direct and explicit connection between cotorsion pairs in $\sS * \Sigma \sS$ and torsion pairs in $ \mod \sS$. 

\begin{introtheorem}[Theorem~\ref{thm:cotorsion-torsion}] \label{intro:cotorsion-torsion}
Suppose $\sT$ is an essentially small, Hom-finite, $\kk$-linear, Krull-Schmidt triangulated category.
If $\sS = \add(s)$ is a presilting subcategory of $\sT$ and $\sC = \sS * \Sigma \sS$, then the restricted Yoneda functor, $F \colon \sC \to \mod \sS$, induces a bijection
\[
\{\text{complete cotorsion pairs in } \sC \} \bij
\{\text{functorially finite torsion pairs in } \mod \sS \}.
\]
In particular, $F$ sends cotorsionfree classes to torsion classes.
\end{introtheorem}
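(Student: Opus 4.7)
The key tool is the restricted Yoneda functor $F \colon \sC \to \mod \sS$, $X \mapsto \sT(-, X)|_\sS$, which will interpret cotorsion data in $\sC$ as torsion data in $\mod \sS$.

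\textbf{Step 1: properties of $F$.} I first establish that $F$ is well-defined, essentially surjective, full, and sends $\Sigma \sS$ to zero. The key observation is that every $X \in \sC = \sS * \Sigma \sS$ fits in a triangle $s_1 \to s_0 \to X \to \Sigma s_1$ with $s_i \in \sS$ (a rotation of the defining decomposition). Applying $F$ and using that $\sT(\sS, \Sigma \sS) = 0$ by presilting yields a projective presentation $F(s_1) \to F(s_0) \to F(X) \to 0$ in $\mod \sS$. Essential surjectivity and fullness then follow from Yoneda on $\sS$, by lifting an arbitrary module and an arbitrary morphism in $\mod \sS$ through their projective presentations back into $\sC$. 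The same presilting input gives $F(\Sigma \sS) = 0$ and the right-exactness of $F$ on conflations in $\sC$.

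\textbf{Step 2: forward direction.} Given a complete cotorsion pair $(\cX, \cY)$ in $\sC$, I form the candidate torsion pair by taking images under $F$ (matching the cotorsion-free class with the torsion class, as required by the statement). For orthogonality: by fullness, any morphism between the images lifts to some $\varphi \colon X \to Y$ in $\sC$ with $X \in \cX$, $Y \in \cY$; combining the cotorsion-pair approximation conflations with $\Ext^1_\sC(\cX, \cY) = 0$, I argue that $\varphi$ is forced to factor through $\Sigma \sS$, so $F(\varphi) = 0$. The torsion-pair short exact sequences are then extracted from the cotorsion-pair approximation conflations $Y \to X \to C$ of a lift $C \in \sC$ of a given $M \in \mod \sS$, using the right-exactness of $F$. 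Functorial finiteness transfers directly: applying $F$ to the cotorsion-pair approximations yields the required precovers and preenvelopes in $\mod \sS$.

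\textbf{Step 3: backward direction and bijection.} Conversely, given a functorially finite torsion pair $(\cT, \cF)$ in $\mod \sS$, I define $\cX$ and $\cY$ by taking preimages of $\cT$ and $\cF$ under $F$ in $\sC$, placing $\Sigma \sS$ in whichever class has $0$ in its image. The precovers and preenvelopes supplied by functorial finiteness, combined with essential surjectivity, lift via projective presentations to the approximation conflations demanded by a complete cotorsion pair. The two assignments are verified to be mutually inverse: in one direction this is immediate from the construction, in the other it reduces to the fact that a class in $\sC$ is determined by its $F$-image once the canonical $\Sigma \sS$-part is fixed.

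\textbf{Main obstacle.} The principal difficulty is the non-faithfulness of $F$: morphisms factoring through $\Sigma \sS$ are killed. Hence $(\cX, \cY)$ in $\sC$ contains strictly more information than $(\cT, \cF)$ in $\mod \sS$, and the inverse construction must canonically reinstate this $\Sigma \sS$-data. The technical core is twofold: (i) translating $\Ext^1_\sC$-orthogonality of the cotorsion pair into $\Hom_{\mod \sS}$-orthogonality via the approximation conflations, and (ii) verifying that $F(\cX)$ is closed under quotients in $\mod \sS$ (and dually $F(\cY)$ under subobjects) — a closure property not implied by fullness alone, which reflects the projective-injective structure of $\sC$ (with $\sS$ projective and $\Sigma \sS$ injective in the inherited extriangulated structure).
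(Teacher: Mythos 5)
Your overall strategy (work through the restricted Yoneda functor $F\colon \sC \to \mod\sS$, which induces an equivalence $\sC/\Sigma\sS \simeq \mod\sS$, and transport approximation triangles) is the right one, and Step 1 is fine, but Step 2 contains a genuine error that propagates to Step 3. You take the candidate torsion pair to be the pair of images $(F\cY,F\cX)$ and claim orthogonality by arguing that any lift $\varphi$ of a morphism between the two images must factor through $\Sigma\sS$. This is false. For any cotorsion pair in $\sC$ one automatically has $\sS\subseteq\cX$ and $\Sigma\sS\subseteq\cY$ (because $\bE(\sS,\sC)=0=\bE(\sC,\Sigma\sS)$ by the presilting condition), so $F\cX$ always contains all the projectives of $\mod\sS$; these lie in the torsionfree class $(F\cY)^{\perp}$ only when $F\cY=0$. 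Concretely, for $\sT=\Db(\kk)$, $\sS=\add(\kk)$ and the complete cotorsion pair $(\cX,\cY)=(\add(\kk),\sC)$, your candidate pair is $(\mod\kk,\mod\kk)$, which is not a torsion pair, and the identity of $F\kk$ is a morphism between the images that does not factor through $\Sigma\sS$. The correct move (and what the paper does) is to define only the torsion class as an image, $\cT=F\cY$, and set $\cF=\cT^{\perp}$; the substance of the forward direction is then showing that $F\cY$ is closed under quotients and extensions in $\mod\sS$ (hence a torsion class, using that $\mod\sS$ is noetherian), which requires octahedron manipulations to replace cones that a priori only lie in $\sS*\Sigma\sS*\Sigma^2\sS$ by objects of $\sC$. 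Your ``main obstacle'' paragraph even states the needed closure properties with the roles of $\cX$ and $\cY$ swapped ($F\cX$ closed under quotients, $F\cY$ under subobjects), contradicting both your own matching and the theorem's assertion that cotorsionfree classes go to torsion classes.

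The same asymmetry breaks Step 3: you cannot define $\cX$ as the preimage of $\cF$, since $\cX$ must contain $\sS$ while $F\sS$ is in general not contained in $\cF$. One must instead set $\cY=\{c\in\sC\mid Fc\in\cT\}$ and $\cX={}^{\perp}(\Sigma\cY)\cap\sC$, and then produce the approximation triangles. Your sketch is also missing the two ingredients that make this step work: a reduction showing it suffices to construct triangles $s\to y\to x\to\Sigma s$ for $s\in\sS$ only (an octahedron argument then handles arbitrary $c\in\sC$), and the Wakamatsu lemma applied to a \emph{minimal} left $\cY$-approximation of $s$ to guarantee that the cone lies in ${}^{\perp}(\Sigma\cY)$. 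Without these, ``lift via projective presentations to the approximation conflations'' does not produce a triangle whose third term is known to lie in $\cX$.
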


We note that the connections between torsion and cotorsion pairs have been studied before in a different setting \cite{BR}. In particular, Beligiannis and Reiten consider cotorsion pairs in abelian categories and the corresponding cotorsion pairs in pretriangulated categories.

\subsection*{Acknowledgments}
The authors would like to thank Steffen K\"onig for reading a preliminary version of this article.
The authors gratefully acknowledge support from the Representation Theory Group at Universit\"at Stuttgart and the Lancaster University Department of Mathematics and Statistics' Visitor Fund to enable parts of this work to be carried out. 
Both authors are grateful for the hospitality they received in Lancaster and Stuttgart.

%===============================================================
% SECTION
\section{Background} \label{sec:background}
%===============================================================

Let $\sA$ be an additive category and $\sB \subset \sA$ a subcategory. For objects $a_1$, $a_2$ of $A$ we will write $\sA(a_1,a_2) = \Hom_\sA(a_1,a_2)$. We define the left and right orthogonal categories of $\sB$ as follows:
\[
{}\orth \sB := \{a \in \sA \mid \sA(a,b) = 0 \text{ for all } b \in \sB\}
\text{ and }
\sB\orth := \{a \in \sA \mid \sA(b,a) = 0 \text{ for all } b \in \sB\}.
\]
We will often use the shorthand $\sA(a,\sB) = 0$ to mean $\sA(a,b) = 0$ for all $b \in \sB$; similarly for the shorthand $\sA(\sB,a)$.

Throughout this note $\sT$ will be a triangulated category with shift functor $\Sigma \colon \sT \to \sT$. 
 For two subcategories $\sA$, $\sB$ of $\sT$ the full subcategory with objects $\{t\mid $ there exists a triangle $a \to t \to b \to \Sigma a$ with $a \in \sA$ and $b \in \sB\}$ will be denoted by $\sA * \sB$. A full additive subcategory $\sC$ of $\sT$ is \emph{extension-closed} if $\sC * \sC = \sC$.

\subsection{Approximations}

Let $\sA$ be a subcategory of $\sT$ and let $t$ be an object of $\sT$. A morphism $f \colon t \to a$ with $a \in \sA$ is called 
\begin{itemize}
\item a \emph{left $\sA$-approximation of $t$} if $\sT(f,\sA) \colon \sT(a,\sA) \to \sT(t,\sA)$ is surjective;
\item \emph{left minimal} if any $g \colon a \to a$ such that $gf = f$ is an automorphism; and,
\item a \emph{minimal left $\sA$-approximation of $t$} if it is both left minimal and a left $\sA$-approximation of $t$. 
\end{itemize}
Left $\sA$-approximations are sometimes called \emph{$\sA$-pre-envelopes}.
If every object of $\sT$ admits a left $\sA$-approximation then $\sA$ is said to be \emph{covariantly finite} in $\sT$.
There is a dual notion of a \emph{(minimal) right $\sA$-approximation} (or an \emph{$\sA$-precover}); if every object of $\sT$ admits a right $\sA$-approximation, then $\sA$ is said to be \emph{contravariantly finite} in $\sT$.

Minimal approximations admit the following important property; see, for example, \cite{Jorgensen} for a triangulated version. We give the statement for left approximations; there is a dual statement for right approximations.

\begin{lemma}[Wakamatsu lemma for triangulated categories] \label{lem:wakamatsu}
Let $\sA$ be an extension closed subcategory of $\sT$ and suppose $f \colon t \to a$ is a minimal left $\sA$-approximation of $t$. Then in the triangle
\[
b \too t \rightlabel{f} a \to \Sigma b,
\]
we have $b \in {}\orth\sA$.
\end{lemma}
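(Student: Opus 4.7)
The plan is to show $\sT(b,a')=0$ for every $a' \in \sA$ by taking an arbitrary $g \colon b \to a'$ and exhibiting it as zero via a pushout argument that exploits extension-closedness together with minimality of $f$.

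First I would form the homotopy pushout of the map $\alpha \colon b \to t$ (the first map of the given triangle) along $g$. Writing this via the standard triangulated construction, the map $\binom{-\alpha}{g} \colon b \to t \oplus a'$ fits into a triangle whose cone I call $t'$, together with structural maps $\pi_t \colon t \to t'$ and $\iota \colon a' \to t'$. Then a 3$\times$3-lemma (or octahedral) argument produces a new triangle
\[
a' \rightlabel{\iota} t' \rightlabel{p} a \rightlabel{\delta} \Sigma a',
\]
with $p \circ \pi_t = f$ and connecting map $\delta = \pm\,\Sigma g \circ (a \to \Sigma b)$. Because $a, a' \in \sA$ and $\sA$ is extension-closed, $t' \in \sA$.

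Next I use the approximation property: since $\pi_t \colon t \to t'$ lands in $\sA$ and $f$ is a left $\sA$-approximation of $t$, there exists $k \colon a \to t'$ with $k \circ f = \pi_t$. Composing, $(p \circ k) \circ f = p \circ \pi_t = f$, so by minimality of $f$ the endomorphism $p \circ k$ of $a$ is an automorphism. Consequently $p$ is a split epimorphism, the triangle above splits, and $\delta = 0$. Therefore $\Sigma g$ vanishes on the third map $a \to \Sigma b$ of the original triangle; equivalently, $g$ vanishes on the rotated map $\Sigma^{-1}a \to b$.

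Finally I convert this vanishing into $g = 0$ via the long exact sequence obtained by applying $\sT(-,a')$ to the rotation $\Sigma^{-1}a \to b \to t \to a$: exactness lets me write $g = h \circ \alpha$ for some $h \colon t \to a'$; then the approximation property yields $h = h' \circ f$ with $h' \colon a \to a'$, and the composition $f \circ \alpha = 0$ in the original triangle forces $g = 0$. The main obstacle is being careful in the pushout step — specifically, identifying the connecting map of the new triangle as (a sign of) $\Sigma g$ composed with the original connecting map, so that its vanishing really encodes vanishing of $g$ after one desuspension; the rest is a clean combination of extension-closedness, the approximation property, and minimality.
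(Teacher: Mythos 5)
Your argument is correct. The paper itself does not prove this lemma (it only cites J{\o}rgensen's paper for the triangulated version), and your proof is precisely the standard one given there: form the homotopy cocartesian square on $b \to t$ and $g \colon b \to a'$ to get a triangle $a' \to t' \to a \to \Sigma a'$ with $t' \in \sA$ by extension-closure, factor $\pi_t$ through the approximation $f$, use minimality to split that triangle, and deduce $g=0$. All the individual steps check out, including the identification of the connecting map as $\pm\Sigma g$ composed with the original one and the final factorisation $g = h\alpha = h'f\alpha = 0$; one could shorten the ending by noting directly that $\iota g = \pi_t\alpha = kf\alpha = 0$ with $\iota$ a split monomorphism, but your long-exact-sequence version is equally valid.
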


\subsection{Co-t-structures, silting subcategories and the extended coheart}

We recall the following definitions from \cite{AI,KV} and \cite{Bondarko}, respectively.

\begin{definition}
A subcategory $\sS$ of a triangulated category $\sT$ is \emph{presilting} if $\sT(\sS, \Sigma^i \sS) = 0$ for all $i > 0$; it is called \emph{silting} if, in addition $\thick \sS = \sT$, where $\thick \sS$ is the smallest triangulated subcategory of $\sT$ containing $\sS$ that is closed under direct summands.
An object $s$ of $\sT$ is a \emph{(pre)silting object} if $\add(s)$ is a (pre)silting subcategory, where $\add(s)$ consists of the direct summands of finite direct sums of copies of $s$.
\end{definition}

\begin{definition}
%A \emph{co-t-structure} in a triangulated category $\sT$ consists of a pair of full subcategories closed under direct summands $(\sA,\sB)$ such that $\sT(\sA,\sB)=0$, $\sT = \sA * \sB$, $\Sigma^{-1}\sA \subseteq \sA$ (equivalently,  $\Sigma\sB \subseteq \sB$).
%The subcategory $\sS = \Sigma \sA \cap \sB$ is called the \emph{coheart} of the co-t-structure $(\sA,\sB)$. 
A co-t-structure $(\sA,\sB)$ in $\sT$ is \emph{bounded} if $\bigcup_{i\in \mathbb{Z}}\Sigma^i\sA=\sT=\bigcup_{i\in \mathbb{Z}}\Sigma^i\sB$.
\end{definition}

The coheart $\sS = \Sigma \sA \cap \sB$ of a co-t-structure $(\sA,\sB)$ is always a presilting subcategory. It is silting precisely when the co-t-structure is bounded \cite[Corollary 5.9]{MSSS}.

\begin{definition}
Let $(\sA,\sB)$ be a co-t-structure in $\sT$. The subcategory $\sC = \Sigma^2 \sA \cap \sB$ will be called the \emph{extended coheart} of the co-t-structure.
\end{definition}

The following lemma shows that the extended coheart of $(\sA,\sB)$ consists of precisely the objects of $\sT$ which are `two-term' with respect to the coheart $\sS$.

\begin{lemma}[{\cite[Lemma 2.1]{IJY}}]
Let $(\sA,\sB)$ be a co-t-structure in $\sT$ with coheart $\sS$. Then the extended coheart $\sC = \Sigma^2 \sA \cap \sB = \sS * \Sigma \sS$.
\end{lemma}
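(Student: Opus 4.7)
The plan is to prove the equality $\Sigma^2\sA\cap\sB = \sS*\Sigma\sS$ by showing both inclusions separately, using only three standard facts about a co-t-structure $(\sA,\sB)$: both $\sA$ and $\sB$ are extension-closed, $\Sigma\sB\subseteq\sB$ (dual to the assumption $\Sigma^{-1}\sA\subseteq\sA$), and, applying the decomposition $\sT=\sA*\sB$ to the shifted co-t-structure $(\Sigma\sA,\Sigma\sB)$, one has $\sT=\Sigma\sA*\Sigma\sB$.

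For the inclusion $\sS*\Sigma\sS\subseteq\Sigma^2\sA\cap\sB$, I would take $c\in\sS*\Sigma\sS$ together with a witnessing triangle
\[
s_1\too c\too\Sigma s_2\too\Sigma s_1
\]
with $s_1,s_2\in\sS$. Since $\sS\subseteq\Sigma\sA\subseteq\Sigma^2\sA$, we also have $\Sigma\sS\subseteq\Sigma^2\sA$, so extension-closure of $\Sigma^2\sA$ forces $c\in\Sigma^2\sA$. Similarly, $s_1\in\sB$ and $\Sigma s_2\in\Sigma\sB\subseteq\sB$, so extension-closure of $\sB$ gives $c\in\sB$.

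For the reverse inclusion $\Sigma^2\sA\cap\sB\subseteq\sS*\Sigma\sS$, I would take $c\in\Sigma^2\sA\cap\sB$ and apply the shifted decomposition $\sT=\Sigma\sA*\Sigma\sB$ to $c$, producing a triangle
\[
\Sigma a\too c\too\Sigma b\too\Sigma^2 a
\]
with $a\in\sA$ and $b\in\sB$. It then remains to check that $\Sigma a\in\sS$ and $b\in\sS$. Rotating to $b\to\Sigma a\to c\to\Sigma b$, the fact that $b,c\in\sB$ and that $\sB$ is extension-closed yields $\Sigma a\in\sB$, hence $\Sigma a\in\Sigma\sA\cap\sB=\sS$. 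Rotating instead to $c\to\Sigma b\to\Sigma^2 a\to\Sigma c$, the fact that $c,\Sigma^2 a\in\Sigma^2\sA$ together with extension-closure of $\Sigma^2\sA$ yields $\Sigma b\in\Sigma^2\sA$, i.e., $b\in\Sigma\sA$; combined with $b\in\sB$ this gives $b\in\sS$, so $\Sigma b\in\Sigma\sS$.

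The argument is essentially routine; there is no real obstacle beyond choosing the correct rotation of the triangle in each case. The only facts one needs to have in hand are the extension-closure of $\sA$ and $\sB$ and the closure $\Sigma\sB\subseteq\sB$, both of which are standard consequences of the co-t-structure axioms but must be invoked rather than rederived in passing.
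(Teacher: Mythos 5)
Your proof is correct. The paper does not prove this lemma itself (it simply cites \cite[Lemma 2.1]{IJY}), and your argument --- decomposing $c$ via the shifted co-t-structure $(\Sigma\sA,\Sigma\sB)$ and using extension-closure of $\sB$ and $\Sigma^2\sA$ on the appropriate rotations to place the outer terms in $\sS$ and $\Sigma\sS$ --- is precisely the standard one from that reference.
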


\subsection{Extriangulated categories and complete cotorsion pairs}

We will use the notion of an extriangulated category from \cite{NP} without recalling the complete definition. 

An \emph{extriangulated category} consists of a triple $(\sC, \bE, \fs)$, where $\sC$ is an additive category,  $\bE(-,-) \colon \sC^{op} \times \sC \to \Ab$ is a biadditive functor and $\fs$ assigns to any element of $\bE(c,a)$ an equivalence class of pairs of morphisms $[a\to b\to c]$, called an \emph{$\bE$-triangle}. In addition the triple $(\sC, \bE, \fs)$ should satisfy a number of axioms reminiscent of the axioms of a triangulated category (without rotation of triangles). 

If $\sC$ is an additive category, $\Sigma$ is an equivalence on $\sC$ and $\bE:=\sC(-,\Sigma -)$, then by \cite[Proposition 3.22]{NP} fixing a triangulated structure on $\sC$ with the shift functor $\Sigma$ is equivalent to fixing an extriangulated structure on $\sC$ with the additive bifunctor $\bE$, where $\fs$ assigns to an element $\delta\in\sC(c,\Sigma a)$ the isomorphism class of distinguished triangles $a\to b \to c \xrightarrow{\delta} \Sigma a$. All extriangulated categories used in this paper will be subcategories of triangulated categories with the induced extriangulated structure, that is $\bE(-,-)$ is the restriction of $\sC(-,\Sigma -)$ and $\bE$-triangles are distinguished triangles $a\to b \to c \xrightarrow{\delta} \Sigma a$ with $a,b,c$ in the subcategory. Analogously to triangulated and exact categories, a subcategory $\sC$ of an extriangulated category is called extension-closed, if for any $\bE$-triangle $a\to b \to c \xrightarrow{\delta} \Sigma a$ with $a,c\in \sC$ the object $b$ is also in $\sC$.

\begin{lemma}
Let $(\sA,\sB)$ be a co-t-structure in $\sT$. Then $\sC = \Sigma^2 \sA \cap \sB$ is an extriangulated category with the extriangulated structure induced by the triangulated structure of $\sT$.
\end{lemma}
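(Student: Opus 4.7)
The plan is to reduce to the well-known fact from \cite{NP} that any extension-closed subcategory of a triangulated category inherits an extriangulated structure from the ambient triangulated structure. All that then remains is to show that $\sC = \Sigma^2\sA \cap \sB$ is extension-closed in $\sT$.

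For this I would verify the two standard closure properties of the co-t-structure separately: $\sB$ is extension-closed, and $\sA$ is extension-closed. Taking $\sB$ first, given a triangle $a \to b \to c \to \Sigma a$ with $a,c \in \sB$ and any $a' \in \sA$, apply $\sT(a',-)$ to obtain an exact sequence $\sT(a',a) \to \sT(a',b) \to \sT(a',c)$. The flanking terms vanish by $\sT(\sA,\sB)=0$, so $\sT(a',b)=0$ for every $a' \in \sA$. Using the decomposition $\sT = \sA * \sB$, write a triangle $a'' \to b \to b'' \to \Sigma a''$ with $a'' \in \sA$ and $b'' \in \sB$; the vanishing forces the map $a'' \to b$ to be zero, whence $b'' \cong b \oplus \Sigma a''$, and the closure of $\sB$ under direct summands gives $b \in \sB$. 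The argument for $\sA$ is entirely dual, applying $\sT(-,b')$ for $b' \in \sB$.

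Since shifts and intersections of extension-closed subcategories are again extension-closed, $\sC = \Sigma^2\sA \cap \sB$ is extension-closed, and hence inherits the induced extriangulated structure with $\bE(c,a) = \sT(c,\Sigma a)$ for $a,c \in \sC$ and $\fs(\delta)$ the equivalence class of the distinguished triangle $a \to b \to c \xrightarrow{\delta} \Sigma a$; extension-closedness is exactly what ensures the middle term $b$ lies in $\sC$, so that $\fs(\delta)$ defines a bona fide $\bE$-triangle. There is no real obstacle here: the argument is essentially a short verification built on the standard extension-closure of the aisle and co-aisle of a co-t-structure, and the only component requiring an external citation is the fact that extension-closed subcategories of triangulated categories carry the induced extriangulated structure.
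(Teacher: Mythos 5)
Your proposal is correct and follows the same route as the paper: show $\sC$ is extension-closed (as an intersection of a shift of the extension-closed $\sA$ with the extension-closed $\sB$) and then invoke \cite{NP} for the induced extriangulated structure. The only difference is that you spell out the standard verification that the aisle and co-aisle of a co-t-structure are extension-closed, which the paper takes as known; your argument for that step (vanishing of $\sT(\sA,b)$ plus splitting the decomposition triangle of $b$) is sound.
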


\begin{proof}
Since both $\sA$ and $\sB$ are extension-closed subcategories of $\sT$, we have that $\sC$ is an extension-closed subcategory of $\sT$.  The triangulated structure on $\sT$ also provides an extriangulated structure on $\sT$, see \cite[Example 2.13]{NP}. Hence, by \cite[Remark 2.18]{NP}, the triangulated structure on $\sT$ restricted to $\sC$ induces an extriangulated structure on $\sC$.
\end{proof}

We transpose the following definitions from the exact and abelian settings (see \cite{Hovey} and \cite{Salce}) to the extriangulated setting.

\begin{definition}
Let $(\sC,\bE,\fs)$ be an extriangulated category. A \emph{cotorsion pair} in $\sC$ consists of a pair of full additive subcategories $(\cX,\cY)$ closed under direct summands such that for each $c \in \sC$ the following holds:
\begin{enumerate}[label=(\arabic*)]
\item $c \in \cX$ if and only if $\bE(c,\cY) = 0$; and,
\item $c \in \cY$ if and only if $\bE(\cX,c) = 0$.
\end{enumerate}
\end{definition}

Since $\cX$ and $\cY$ are each realised as orthogonal subcategories they are closed under extensions. Indeed, by \cite[Proposition 3.3]{NP} any $\bE$-triangle $a\to b\to c$ gives rise to an exact sequence $\sC(-,a) \to \sC(-,b) \to \sC(-,c) \to \bE(-,a) \to \bE(-,b) \to \bE(-,c)$ and its dual.

\begin{definition}[{\cite[Definition 4.1]{NP}}] \label{DefCotorsion}
Let $(\sC,\bE,\fs)$ be an extriangulated category. A \emph{complete cotorsion pair} in $\sC$ consists of a pair of full additive subcategories $(\cX,\cY)$ closed under direct summands such that the following hold:
\begin{enumerate}[label=(\arabic*)]
\item \label{ext-orth} for each $x \in \cX$ and $y \in \cY$ we have $\bE(x,y) = 0$;
\item \label{E-triangle-CY} for each $c \in \sC$ there is an $\bE$-triangle $c \to y \to x$ with $x \in \cX$ and $y \in \cY$; and,
\item \label{E-triangle-XC} for each $c \in \sC$ there is an $\bE$-triangle $y \to x \to c$ with $x \in \cX$ and $y \in \cY$.
\end{enumerate}
A pair of full subcategories $(\cX,\cY)$ satisfying only condition \ref{ext-orth} will be called an \emph{Ext-orthogonal pair}.
\end{definition}

For each object $c$ of $\sC$, the morphism $c \to y$ occurring in the $\bE$-triangle above is always a left $\cY$-approximation of $c$. 
Similarly, the morphism $x \to c$ in the $\bE$-triangle above is a right $\cX$-approximation of $c$.

\begin{remark} \label{rem:cotorsion}
In this article we revert to the classical distinction between complete cotorsion pair and cotorsion pair in \cite{Hovey,Salce}. Therefore what is called a cotorsion pair in \cite{NP} will be called a complete cotorsion pair here. By \cite[Remark 4.4]{NP}, any complete cotorsion pair is a cotorsion pair, since the $0$ element of $\bE(c,a)$ is represented, up to equivalence, by a split $\bE$-triangle $a\to a\oplus c\to c$.
\end{remark}

\subsection{The restricted Yoneda functor} \label{sec:yoneda}

Assume now  that $\sT$ is essentially small, idempotent complete, Hom-finite, $\kk$-linear and Krull-Schmidt, where $\kk$ is a commutative noetherian ring. In this situation, Hom-finite means that $\sT(a,b)$ is a finitely-generated $\kk$-module for any $a,b\in \sT$. In particular, the endomorphism ring of an object $\sT(s,s)$ is a noetherian ring.
Suppose $\sS = \add(s)$ is a presilting subcategory of $\sT$ and let $\sC := \sS * \Sigma \sS$. %e.g. $\sC$ is the extended coheart of a co-t-structure 
We write $\Mod \sS$ for the category of contravariant additive functors from $\sS$ to the category $\Mod \kk$ and $\mod \sS$ for the full subcategory of finitely presented functors; see \cite{Auslander}.
Consider the restricted Yoneda functor
\begin{align*}
F \colon \sT & \to \Mod \sS. \\
t & \mapsto \sT(-,t)|_\sS
\end{align*}
By \cite[Proposition 6.2]{IY}, \cite[Remark 3.1]{IJY} the restricted Yoneda functor induces an equivalence of categories,
\[
F \colon (\sS * \Sigma \sS)/\Sigma \sS \to \mod \sS.
\]
Note there is an equivalence $\mod \sS \simeq \mod E$, where $E = \sT(s,s)$; see \cite[Remark 4.1]{IJY}. 
%\textcolor{teal}{the original IY paper where this is proved has a lot of restrictions on the ambient triangulated category (e.g. Krull-Schmidt), IJY only assume that $\sT$ is an essentially small, idempotent complete triangulated category. Section 4 in IJY have even more assumptions}

\subsection{Torsion pairs}

A \emph{torsion pair} on an abelian category $\sH$ consist of a pair of full subcategories $(\cT,\cF)$ of $\sH$ such that $\cT\orth = \cF$, ${}\orth \cF = \cT$ and for each object $h$ of  $\sH$ there is a short exact sequence 
\begin{equation} \label{tp}
0 \to t \to h \to f \to 0
\end{equation}
with $t \in \cT$ and $f \in \cF$. 
The subcategory $\cT$ is called the \emph{torsion class} and the subcategory $\cF$ is called the \emph{torsionfree class}.

By virtue of the short exact sequence \eqref{tp}, it follows that $\cT$ is contravariantly finite in $\sH$ and $\cF$ is covariantly finite in $\sH$.
If, in addition, $\cT$ is covariantly finite in $\sH$ (or, equivalently, $\cF$ is contravariantly finite in $\sH$ \cite[Theorem]{Smalo}), then we say that $(\cT,\cF)$ is a \emph{functorially finite torsion pair}; see e.g. \cite{AIR}.

If $\sH$ is noetherian, for example $\sH \simeq \mod E$ for a noetherian ring $E$, then any subcategory closed under extensions and quotients is a torsion class of a torsion pair; see, e.g. \cite[Chapter VI]{ASS} or \cite[Proposition 3.5]{Liu-Stanley}.
The dual statement holds for torsionfree classes.

%===============================================================
% SECTION
\section{HRS tilting of co-t-structures at complete cotorsion pairs} \label{sec:co-hrs}
%===============================================================

In this section $\sT$ will be an arbitrary triangulated category. 
The aim of this section is to prove Theorem~\ref{intro:co-hrs}.

\begin{theorem} \label{thm:co-hrs}
Suppose $\sT$ is a triangulated category,
$(\sA,\sB)$ is a co-t-structure in $\sT$, and $\sC = \Sigma^2 \sA \cap \sB$ is the extended coheart of $(\sA,\sB)$. 
Then there is a bijection
\begin{align*}
\{\text{co-t-structures } (\sA',\sB') \text{ with } \sA \subseteq \sA' \subseteq \Sigma \sA\}
& \bij
\{\text{complete cotorsion pairs } (\cX,\cY) \text{ in } \sC\}. \\
(\sA',\sB') 
& \longmapsto
(\sB\cap \Sigma \sA',\sB'\cap \Sigma^2 \sA) \\
(\add(\Sigma^{-1} \sA * \Sigma^{-1} \cX), \add(\cY * \Sigma^2 \sB))
& \longmapsfrom 
(\cX,\cY)
\end{align*}
\end{theorem}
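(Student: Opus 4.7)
My plan is to establish the bijection through three stages: checking the forward map produces a complete cotorsion pair, checking the backward map produces a co-t-structure in the prescribed range, and verifying mutual inversion. A few preliminary observations streamline the argument: for any co-t-structure $\sB = \sA\orth$ and $\Sigma\sB\subseteq \sB$; the inclusion $\sA \subseteq \sA'$ forces $\sB' \subseteq \sB$; and the presilting property of the coheart combined with the defining identities of a complete cotorsion pair give $\sS \subseteq \cX$ and $\Sigma\sS \subseteq \cY$. For the forward map, I set $\cX := \sB\cap \Sigma\sA'$ and $\cY := \sB'\cap \Sigma^{2}\sA$; both land in $\sC$, and Ext-orthogonality $\sT(\cX,\Sigma\cY) = 0$ follows immediately from $\sT(\sA',\sB')=0$. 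For the two required $\bE$-triangles, I decompose $c \in \sC$ (respectively $\Sigma^{-1} c$) via $(\sA',\sB')$ and rotate; applying extension-closedness of $\sB$ and $\Sigma^{2}\sA$ to the resulting triangles places the outer terms into $\cX$ and $\cY$.

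For the backward map, set $\sA' := \add(\Sigma^{-1}\sA * \Sigma^{-1}\cX)$ and $\sB' := \add(\cY * \Sigma^{2}\sB)$. The inclusion $\sA' \subseteq \Sigma\sA$ follows from extension-closedness of $\Sigma\sA$, and for $\sA\subseteq\sA'$, given $a\in \sA$ I decompose $\Sigma a$ via $(\sA,\sB)$ and observe the $\sB$-part lies in $\Sigma\sA\cap\sB = \sS \subseteq \cX$, then shift. The closure $\Sigma^{-1}\sA' \subseteq \sA'$ is then automatic from $\sA'\subseteq \Sigma\sA$. For $\sT(\sA',\sB')=0$, I reduce to four pairs on generators: three vanish from $\sT(\sA,\sB)=0$ (using $\Sigma^{n}\sB \subseteq \sB$ and $\cX\subseteq \Sigma^{2}\sA$), and the fourth is precisely $\bE(\cX,\cY)=0$. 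The crux is $\sT = \sA' * \sB'$: given $t\in \sT$, decompose $t$ via $(\sA,\sB)$ as $a_0 \to t \to b_0$, then decompose $b_0$ via $(\Sigma^{2}\sA,\Sigma^{2}\sB)$ as $\alpha\to b_0 \to \beta$ (where $\alpha\in \sC$ by extension-closedness of $\sB$), and apply a left $\bE$-triangle $\alpha \to y \to x$ of the cotorsion pair. Form the homotopy pushout $b' := y \sqcup_\alpha b_0$; the induced triangles $y \to b' \to \beta$ and $b_0 \to b' \to x$ show $b' \in \cY * \Sigma^{2}\sB \subseteq \sB'$. An octahedron on the composite $t \to b_0 \to b'$ then produces a triangle $a' \to t \to b'$ with $a' \in \sA * \Sigma^{-1}\cX$; iterating the decomposition witnessing $\sA \subseteq \sA'$ together with extension-closedness of $\cX$ inside $\sC$ refines this to $a' \in \sA'$.

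For mutual inversion, the forward-then-backward composition is handled by a Hom long-exact-sequence argument: given $a'\in \sA'$, I decompose $\Sigma a'$ via $(\sA,\sB)$ and apply $\sT(-, \sB')$ to the resulting triangle to show the $\sB$-part lies in $\orth\sB'$, hence in $\Sigma\sA' \cap \sB = \cX$; shifting yields $a' \in \Sigma^{-1}\sA * \Sigma^{-1}\cX$, and the computation for $\sB'$ is dual. For the backward-then-forward composition, $\cX \subseteq \cX''$ is trivial; for the reverse, given $c \in \cX''$, written as a summand of some $v \in \sA * \cX$ with $v \in \sB$, I apply the cotorsion-pair characterization $c\in \cX \iff \bE(c,\cY)=0$ and compute $\sT(v,\Sigma\cY)$ via the defining triangle, using $\sT(\sA,\Sigma\cY) = 0$ (from $\Sigma\cY\subseteq \sB$) and $\bE(\cX,\cY)=0$ to conclude. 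The main obstacle is the covering condition $\sT = \sA'*\sB'$: it demands orchestrating three different decompositions via a homotopy pushout and an octahedron, and then carefully identifying the outer terms with the prescribed generators $\Sigma^{-1}\sA * \Sigma^{-1}\cX$ and $\cY*\Sigma^{2}\sB$ rather than merely with larger intermediate classes.
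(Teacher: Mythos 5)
Your proposal is correct and follows the same overall architecture as the paper's proof: the same formulas for the two maps and the same three-stage verification. Your treatment of the backward map coincides with the paper's Step~2 --- the decomposition of $t$ via $(\sA,\sB)$, then of $b_t$ via $(\Sigma^2\sA,\Sigma^2\sB)$, the cotorsion triangle for the resulting object of $\sC$, and two octahedra (your homotopy pushout $b'=y\sqcup_\alpha b_0$ is exactly the paper's first octahedron), followed by the refinement $\sA*\Sigma^{-1}\cX\subseteq\add(\Sigma^{-1}\sA*\Sigma^{-1}\cX)$ using $\sS\subseteq\cX$ and extension-closedness of $\cX$. The one genuinely different (and simpler) step is the forward map: the paper manufactures the two $\bE$-triangles for $c\in\sC$ by combining an $(\sA,\sB)$-decomposition of $\Sigma^{-1}c$ with $(\sA',\sB')$-decompositions of $\Sigma a$ and of $b$ via the octahedral axiom, whereas you just take the $(\sA',\sB')$-decompositions of $c$ and of $\Sigma^{-1}c$ and rotate; extension-closedness of $\sB$ and of $\Sigma^2\sA$, together with $\sA\subseteq\sA'\subseteq\Sigma\sA$ and $\Sigma\sB\subseteq\sB'\subseteq\sB$, does indeed place the outer terms in $\cY$ and $\cX$, so this avoids the octahedra entirely. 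Your mutual-inversion arguments are also sound; in particular your Hom-long-exact-sequence proof that every $a'\in\sA'$ lies in $\Sigma^{-1}\sA*\Sigma^{-1}\cX$ (by showing the $\sB$-part of the $(\sA,\sB)$-decomposition of $\Sigma a'$ lies in $\orth(\Sigma\sB')\cap\sB=\cX$) makes explicit what the paper dispatches with the observation that two nested co-t-structures coincide. Two small points you should state explicitly in a full write-up: the classes $\sB\cap\Sigma\sA'$ and $\sB'\cap\Sigma^2\sA$ are closed under direct summands (immediate, but part of Definition~\ref{DefCotorsion}), and in the backward-then-forward composition, once $\cX=\cX''$ the equality $\cY=\cY''$ follows from the orthogonality characterisation of a cotorsion pair (Remark~\ref{rem:cotorsion}) rather than being automatic.
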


\begin{remark}
Let $(\sA,\sB)$ be a co-t-structure in $\sT$. A co-t-structure $(\sA',\sB')$ such that $\sA \subseteq \sA' \subseteq \Sigma \sA$ (or, equivalently, $\sB \supseteq \sB' \supseteq \Sigma \sB'$) is often said to be \emph{intermediate with respect to $(\sA,\sB)$}; cf. \cite{Angeleri} or \cite{IJY}, in the former case `intermediate' means with respect to the `standard co-t-structure' and the interval may be larger.
\end{remark}

\begin{proof} 
The proof of this theorem consists of three steps: first we construct the map 
\[ 
\phi \colon \{\text{co-t-structures } (\sA',\sB') \text{ with } \sA \subseteq \sA' \subseteq \Sigma \sA\}
\to
\{\text{complete cotorsion pairs } (\cX,\cY) \text{ in } \sC\};
\]
then we construct the map $\psi$ in the opposite direction; then we prove that $\phi\psi=\mathrm{id}$ and $\psi\phi=\mathrm{id}$.

\textbf{Step 1:} Let  $(\sA',\sB')$ be  a co-t-structure in $\sT$ such that $\sA \subseteq \sA' \subseteq \Sigma \sA$ (equivalently $\Sigma \sB \subseteq \sB' \subseteq  \sB$) and consider the following subcategories of the extended coheart $\sC$:
\[
\cX:=\sB\cap \Sigma \sA'\subseteq \sC
\quad \text{and} \quad 
\cY:=\sB'\cap \Sigma^2 \sA \subseteq \sC.
\]
Note that $\cX$ and $\cY$ are closed under summands, since so are $\sA, \sA', \sB$ and $\sB'$. We claim that $(\cX,\cY)$ is a complete cotorsion pair in $\sC$. 
Since $\cX\in\Sigma \sA'$ and $\Sigma \cY\in \Sigma \sB'$, we have $\bE(\cX,\cY)=\sT(\cX,\Sigma \cY)=0$  and condition \ref{ext-orth} of Definition \ref{DefCotorsion} holds.

To find the $\bE$-triangle required for condition \ref{E-triangle-CY}, consider the following triangles for $c\in \sC$,
\begin{equation} \label{approx-triangles}
b \to \Sigma a\to c \to \Sigma b
\quad \text{and} \quad
a'\to \Sigma a \to  b'\to \Sigma a',
\end{equation}
where $a\in\sA$, $b\in\sB$, $a'\in\sA'$ and $b'\in\sB'$.
Applying the octahedral axiom, we get:
\[
\xymatrix@!R=5px{
&b \ar@{=}[r] \ar[d] &b \ar[d] &\\
a'\ar[r] \ar@{=}[d] &\Sigma a \ar[r] \ar[d]&b' \ar[r] \ar[d]&\Sigma a'  \ar@{=}[d]\\
a' \ar[r]&c \ar[r] \ar[d]&y \ar[r] \ar[d]& \Sigma a' \\
&\Sigma b \ar@{=}[r]&\Sigma b&
}
\]
We first observe that $y \in \cY$.
In the triangle $b'\to y \to \Sigma b\to \Sigma b'$ the outer terms $b'\in \sB'$ and $\Sigma b\in \Sigma \sB \subseteq \sB'$, so $y\in \sB'$. In the triangle $c\to y \to \Sigma a'\to \Sigma c$ the outer terms $c\in \Sigma^2 \sA$ and $\Sigma a'\in \Sigma \sA' \subseteq \Sigma^2 \sA$, so $y\in \Sigma^2 \sA$ and thus $y\in \cY$. 

Since $\Sigma b\in \sB$ and $\Sigma c \in   \sB$ we get that $\Sigma^2 a \in \sB$.  In the triangle $b'\to \Sigma a' \to \Sigma^2 a\to \Sigma b'$ the outer terms $b'\in \sB' \subseteq \sB$ and $\Sigma^2 a\in  \sB$, so $\Sigma a'\in \Sigma \sA' \cap \sB = \cX$. Thus, the triangle 
\[
c\to y \to \Sigma a' \to \Sigma c
\]
gives the $\bE$-triangle required for condition \ref{E-triangle-CY} of Definition \ref{DefCotorsion}. 

To find the $\bE$-triangle required for condition \ref{E-triangle-XC}, consider $c \in \sC$ as above and a triangle 
\[
a''\to  b \to  b'' \to \Sigma a''
\]
where $b$ is the object in the triangle in \eqref{approx-triangles}, $a''\in\sA'$ and $b''\in\sB'$.
Observe that $\Sigma^{-1}c\in \Sigma \sA$ and $\Sigma a\in \Sigma \sA$, so $b\in \Sigma \sA \subseteq \Sigma^2 \sA$. Since $\Sigma a'' \in \Sigma \sA' \subseteq \Sigma^2 \sA$, we get $b''\in \Sigma^2 \sA$.
Applying the octahedral axiom again, we get:
\[
\xymatrix@!R=5px{
&a'' \ar@{=}[r] \ar[d] &a'' \ar[d] &\\
\Sigma^{-1}c\ar[r] \ar@{=}[d] &b \ar[r] \ar[d]&\Sigma a \ar[r] \ar[d]&c  \ar@{=}[d]\\
\Sigma^{-1}c \ar[r]&b'' \ar[r] \ar[d]&x \ar[r] \ar[d]& c \\
&\Sigma a'' \ar@{=}[r]&\Sigma a''&
}
\]
Clearly $x\in  \cX$ and $b''\in \cY$, so the triangle 
\[
b''\to x\to c \to \Sigma b''
\]
gives the $\bE$-triangle required for condition \ref{E-triangle-XC} of Definition~\ref{DefCotorsion}.
Thus the assignment $\phi \colon (\sA',\sB') \mapsto (\cX,\cY)$ defines a map from co-t-structures $(\sA',\sB')$ such that $\sA \subseteq \sA' \subseteq \Sigma \sA$ to complete cotorsion pairs in $\sC$.

\textbf{Step 2:} We now construct the map in the other direction. Let $(\sA,\sB)$ be a co-t-structure in $\sT$ and let $(\cX,\cY)$ be a complete cotorsion pair in the extended coheart $\sC=\Sigma^2 A \cap B$. Consider the following pair of subcategories of $\sT$:
\[
(\sA',\sB'):=(\add(\Sigma^{-1} \sA * \Sigma^{-1}\cX), \add(\cY * \Sigma^2 \sB)).
\]
The subcategories are clearly orthogonal.  

To see that $\Sigma^{-1} \sA' \subseteq \sA'$ and $\Sigma \sB' \subseteq \sB'$ we observe that $\Sigma^{-1} \sA * \Sigma^{-1} \cX = \sA * \Sigma^{-1} \cX$ and $\cY * \Sigma^2 \sB = \cY * \Sigma \sB$. We show the first equality holds; the second equality is analogous. The inclusion $\Sigma^{-1} \sA * \Sigma^{-1} \cX \subseteq \sA * \Sigma^{-1} \cX$ is immediate because $\Sigma^{-1} \sA \subseteq \sA$. For the other inclusion, consider a decomposition of $t \in \sA * \Sigma^{-1} \cX$: $a \to t \to \Sigma^{-1} x \to \Sigma a$ with $a \in \sA$ and $x \in \cX$. Decompose $a$ with respect to the co-t-structure $(\Sigma^{-1} \sA,\Sigma^{-1} \sB)$ to get a triangle $\Sigma^{-1} a' \to a \to \Sigma^{-1} s \to a'$ with $s \in \sS = \Sigma \sA \cap \sB$. Since $(\cX,\cY)$ is a complete cotorsion pair, $\sS \subseteq \cX$. Applying the octahedral axiom to the two triangles gives
\[
\xymatrix@!R=5px{
                                                & \Sigma^{-1} a' \ar[d] \ar@{=}[r] & \Sigma^{-1} a' \ar[d]          & \\
\Sigma^{-2} x \ar@{=}[d] \ar[r] & a \ar[r] \ar[d]                             & t \ar[r] \ar[d]                      & \Sigma^{-1} x \ar@{=}[d] \\
\Sigma^{-2} x \ar[r]                  & \Sigma^{-1} s \ar[r] \ar[d]           & \Sigma^{-1} x' \ar[r] \ar[d] & \Sigma^{-1} x \\
                                                & a' \ar@{=}[r]                               & a'                                      & 
}
\]
in which $x' \in \cX$, giving a decomposition of $t \in \Sigma^{-1} \sA * \Sigma^{-1} \cX$. Hence $\sA * \Sigma^{-1} \cX = \Sigma^{-1} \sA * \Sigma^{-1} \cX$.
The condition $\sA\subseteq \sA'\subseteq \Sigma \sA$ also holds.

It remains for us to construct the approximation triangle from the definition of the co-t-structure. 
Consider the following triangles for $t\in \sT$:
\[
a_t \to t \to b_t \to \Sigma a_t
\quad \text{and} \quad
\Sigma b \to \Sigma^2 a\to b_t \to  \Sigma^2 b,
\]
where $a_t, a \in\sA$ and $b,b_t \in \sB$.
Since $\Sigma^2 a \in \Sigma^2 \sA \cap \sB=\sC$, there is a triangle
\[
\Sigma^{-1}x \to \Sigma^2 a \to y \to x
\]
coming from the $\bE$-triangle occurring in condition \ref{E-triangle-CY} of the definition of complete cotorsion pair.
Applying the octahedral axiom twice, we get:
\[
\xymatrix@!R=5px{
&\Sigma b \ar@{=}[r] \ar[d] &\Sigma b \ar[d] &\\
\Sigma^{-1} x \ar[r] \ar@{=}[d] &\Sigma^{2} a \ar[r] \ar[d]&y \ar[r] \ar[d]&x  \ar@{=}[d]\\
\Sigma^{-1} x \ar[r]&b_t \ar[r] \ar[d]&b' \ar[r] \ar[d]& x \\
& \Sigma^{2} b \ar@{=}[r]&\Sigma^{2} b&
}
\qquad
\xymatrix@!R=5px{
&\Sigma^{-1} x \ar@{=}[r] \ar[d] &\Sigma^{-1} x \ar[d] &\\
t \ar[r] \ar@{=}[d] & b_t \ar[r] \ar[d]&\Sigma a_t \ar[r] \ar[d]&\Sigma t  \ar@{=}[d]\\
t \ar[r]&b' \ar[r] \ar[d]&\Sigma a' \ar[r] \ar[d]& \Sigma t \\
& x \ar@{=}[r]&x &
}
\]
where, in the left-hand diagram, we see $b'\in \cY * \Sigma^{2} \sB \subseteq \sB'$,
and in the right-hand diagram, we have $a'\in \sA * \Sigma^{-1} \cX \subseteq \sA'$. Thus the triangle
\[
a' \to t\to b' \to \Sigma a'
\]
is an approximation triangle for $t$ with respect to the co-t-structure $(\sA',\sB')$.
Thus the assignment $\psi \colon (\cX,\cY) \mapsto (\sA'= \add(\Sigma^{-1} \sA * \Sigma^{-1} \cX),\sB' = \add(\cY * \Sigma^2 \sB))$ defines a map from complete cotorsion pairs in $\sC$ to co-t-structures $(\sA',\sB')$ such that $\sA \subseteq \sA' \subseteq \Sigma \sA$.

\textbf{Step 3:}  We now show that the maps $\phi$ and $\psi$ defined in Steps 1 and 2 are mutually inverse.
Let $(\sA,\sB)$ be a co-t-structure in $\sT$, let $(\cX,\cY)$ be a complete cotorsion pair in the extended coheart $\sC=\Sigma^2 A \cap B$ and let $(\sA',\sB') = \psi\big( (\cX,\cY) \big)$ be the co-t-structure constructed in Step 2. 
%Consider the following subcategories of $\sC$, 
Let $(\cX',\cY') = \phi \big( (\sA',\sB') \big)$ be the complete cotorsion pair constructed from $(\sA',\sB')$ is Step 1. That is,
\[
\cX':=\sB\cap \Sigma \sA'\subseteq \sC 
\quad \text{and} \quad 
\cY':=\sB'\cap \Sigma^2 \sA \subseteq \sC.
\]
Since $\Sigma \cY'\subseteq \add(\Sigma \cY * \Sigma^2 \sB) $ and $\cX\subseteq \Sigma^2 \sA \cap \sB$ we get that $\sT(\cX,\Sigma\cY')=0$. 
For an object $y'\in \cY'$ we can consider the triangle $y'\to y \to x \to \Sigma y'$ coming from the definition of the complete cotorsion pair $(\cX,\cY)$. Since the map $x \to \Sigma y'$ is zero, the triangle splits and $y'$ is a summand of $y$. Since $\cY$ is closed under summands, we get $\cY'\subseteq \cY$. Similarly $\sT(\cX',\Sigma\cY)=0$. The splitting of the  triangle $y\to y' \to x' \to \Sigma y$ from the definition of the complete cotorsion pair $(\cX',\cY')$ gives that $\cY\subseteq \cY'$. Since $\cX=\sC\cap(^{\perp}\Sigma\cY)=\sC\cap(^{\perp}\Sigma\cY')=\cX'$, the cotorsion pairs coincide and  $\phi\psi=\mathrm{id}$.

Let $(\sA',\sB')$ be a co-t-structure such that $\sA\subseteq \sA'\subseteq \Sigma \sA$ (equivalently, $\Sigma \sB\subseteq \sB'\subseteq  \sB$). 
Consider the co-t-structure $(\sA'',\sB''):=(\add(\Sigma^{-1} \sA * (\Sigma^{-1}\sB \cap \sA' )), \add((\sB'\cap \Sigma^2 A) * \Sigma^2 \sB))$. Clearly $\sA''\subseteq \sA'$ and $\sB''\subseteq \sB'$ and since both pairs of subcategories are co-t-structures we get $(\sA',\sB')=(\sA'',\sB'')$ and $\psi\phi=\mathrm{id}$. Thus we get the desired bijection.
\end{proof}

\begin{remark}
In the definition of the intermediate co-t-structure $(\sA',\sB')$ obtained from a complete cotorsion pair $(\cX,\cY)$ in the extended coheart $\sC = \sS * \Sigma \sS$ in Theorem~\ref{thm:co-hrs} it is not obvious that $\Sigma^{-1} \sA * \Sigma^{-1} \cX$ and $\cY * \Sigma^2 \sB$ are closed under summands, hence we are required to take the additive closure.  
However, one can check that $\sT(\sA,\cX) = 0$ and $\sT(\cY,\Sigma^2 \sB) = 0$, so that in the case that $\sT$ is Krull-Schmidt, applying \cite[Proposition 2.1]{IY}, we see that $\Sigma^{-1} \sA * \Sigma^{-1} \cX=\sA * \Sigma^{-1} \cX$ and $\cY * \Sigma^2 \sB$ are closed under direct summands.

In light of Step 2 of the proof of Theorem~\ref{thm:co-hrs}, there are alternative descriptions of $\sA'$ and $\sB'$. The description we have chosen has two advantages: it is the closest parallel to classic HRS tilting for t-structures using torsion pairs, and it makes the equality $\Sigma^{-1} \cX * \cY = \Sigma^{-1} \sS * \sS * \Sigma \sS$ intuitive; see Figure~\ref{fig:schematic} for a schematic of the situation. Note that the equality $\Sigma^{-1} \cX * \cY = \Sigma^{-1} \sS * \sS * \Sigma \sS$ holds, since $\sT(\Sigma^{-1} \cX ,\Sigma \cY) =0$, so $\Sigma^{-1} \cX * \cY$ is extension closed \cite[Lemma 8]{NSZ}. 

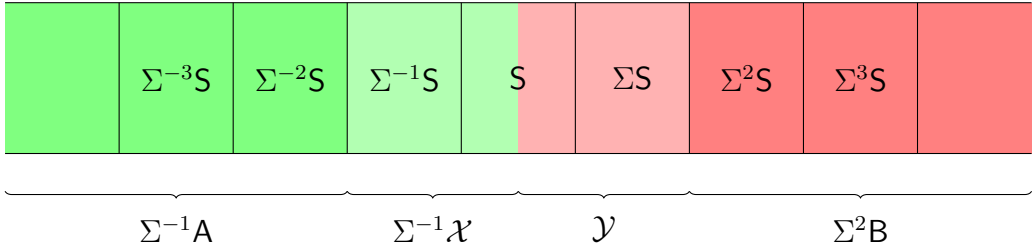
\begin{figure}
\begin{center}
\begin{tikzpicture}
%\draw[help lines] (-6,-2) grid [step=0.5] (9,3);

%aisles and co-aisles
\fill[green!50] (-6,0) -- (-1.5,0) -- (-1.5,2) -- (-6,2) -- cycle;
\fill[red!50] (3,0) -- (7.5,0) -- (7.5,2) -- (3,2) -- cycle;

%cotorsion pair
%\fill[pattern=north east lines] (-1.5,0) -- (0.75,0) -- (0.75,2) -- (-1.5,2) -- cycle;
\fill[green!30] (-1.5,0) -- (0.75,0) -- (0.75,2) -- (-1.5,2) -- cycle;
%\fill[pattern=north west lines] (0.75,0) -- (3,0) -- (3,2) -- (0.75,2) -- cycle;
\fill[red!30] (0.75,0) -- (3,0) -- (3,2) -- (0.75,2) -- cycle;

%braces
\draw[decoration={brace,mirror},decorate] (3,-0.5) --  (7.5,-0.5);
\draw[decoration={brace,mirror},decorate] (-6,-0.5) -- (-1.5,-0.5);
\draw[decoration={brace,mirror},decorate] (-1.5,-0.5) -- (0.75,-0.5);
\draw[decoration={brace,mirror},decorate] (0.75,-0.5) -- (3,-0.5);

%horizontal lines
\draw (-6,2) -- (7.5,2);
\draw (-6,0) -- (7.5,0);

%vertical lines
\draw (-4.5,0) -- (-4.5,2);
\draw (-3,0) -- (-3,2);
\draw (-1.5,0) -- (-1.5,2);
\draw (0,0) -- (0,2);
\draw (1.5,0) -- (1.5,2);
\draw(3,0) -- (3,2);
\draw (4.5,0) -- (4.5,2);
\draw (6,0) -- (6,2);
%\draw (7.5,0) -- (7.5,2);

%labels
\node at (-3.75,1) {$\Sigma^{-3} \sS$};
\node at (-2.25,1) {$\Sigma^{-2} \sS$};
\node at (-0.75,1) {$\Sigma^{-1} \sS$};
\node at (0.75,1) {$\sS$};
\node at (2.25,1) {$\Sigma \sS$};
\node at (3.75,1) {$\Sigma^2 \sS$};
\node at (5.25,1) {$\Sigma^3 \sS$};
\node at (-3.75,-1) {$\Sigma^{-1} \sA$};
\node at (-0.375,-1) {$\Sigma^{-1} \cX$};
\node at (1.875,-1) {$\cY$};
\node at (5.25,-1) {$\Sigma^2 \sB$};

\end{tikzpicture}
\end{center}
\caption{Schematic showing the construction of the intermediate co-t-structure $(\sA',\sB')$ in Theorem~\ref{thm:co-hrs} from a complete cotorsion pair $(\cX,\cY)$ in the extended coheart $\sC= \sS* \Sigma \sS$ of the co-t-structure $(\sA,\sB)$.} \label{fig:schematic}
\end{figure}
\end{remark}

The corollary below shows that Theorem~\ref{thm:co-hrs} recovers the bijection between co-t-structures intermediate with respect to $(\sA,\sB)$ and silting subcategories $\sS' \subseteq \sS * \Sigma \sS$ in \cite[Theorem 2.3]{IJY}.

\begin{corollary} \label{cor:co-hrs}
Suppose 
$(\sA,\sB)$ is a co-t-structure in $\sT$, and $\sC = \Sigma^2 \sA \cap \sB$ is the extended coheart of $(\sA,\sB)$.
If $(\cX,\cY)$ is a complete cotorsion pair in $\sC$, then its core $\cW = \cX \cap \cY$ is the coheart of the corresponding intermediate co-t-structure $(\sA',\sB') = (\add(\Sigma^{-1} \sA* \Sigma^{-1}\cX),\add(\cY*\Sigma^2 \sB))$.
\end{corollary}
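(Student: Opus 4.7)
The coheart of $(\sA',\sB')$ is by definition $\Sigma \sA' \cap \sB'$, so the plan is simply to show that $\cX \cap \cY = \Sigma \sA' \cap \sB'$. This should be essentially immediate from the bijection of Theorem~\ref{thm:co-hrs} combined with the intermediacy hypothesis, so I do not anticipate any substantive obstacle.

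First, I would appeal to Theorem~\ref{thm:co-hrs}, which says the maps $\phi$ and $\psi$ are mutually inverse. Applying $\phi$ to $(\sA',\sB') = \psi(\cX,\cY)$ yields the identities
\[
\cX = \sB \cap \Sigma \sA' \quad \text{and} \quad \cY = \sB' \cap \Sigma^2 \sA.
\]
Intersecting gives
\[
\cX \cap \cY = \Sigma \sA' \cap \sB \cap \sB' \cap \Sigma^2 \sA.
\]

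Next, I would invoke the intermediacy condition $\sA \subseteq \sA' \subseteq \Sigma \sA$, which, by shifting, yields $\Sigma \sA' \subseteq \Sigma^2 \sA$, and which is equivalent to $\Sigma \sB \subseteq \sB' \subseteq \sB$. The latter gives $\sB' \subseteq \sB$. Hence the intersections $\Sigma \sA' \cap \Sigma^2 \sA = \Sigma \sA'$ and $\sB \cap \sB' = \sB'$ collapse, leaving
\[
\cX \cap \cY = \Sigma \sA' \cap \sB',
\]
which is precisely the coheart of $(\sA',\sB')$. This finishes the proof. The only thing to double-check is that the two identities $\cX = \sB \cap \Sigma \sA'$ and $\cY = \sB' \cap \Sigma^2 \sA$ really do come from the bijection in Theorem~\ref{thm:co-hrs}, rather than being a tautological restatement; but this is exactly the content of the relation $\phi \psi = \mathrm{id}$ established in Step~3 of its proof.
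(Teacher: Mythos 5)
Your proposal is correct and is essentially the paper's own argument: both use the identities $\cX = \sB \cap \Sigma\sA'$ and $\cY = \sB' \cap \Sigma^2\sA$ from $\phi\psi = \mathrm{id}$ in Theorem~\ref{thm:co-hrs}, and then collapse the fourfold intersection via the intermediacy inclusions (the paper phrases this as $\cW = \sS' \cap \sC$ with $\sS' \subseteq \sC$, which is the same computation).
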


\begin{proof}
Let $\sS' = \Sigma \sA' \cap \sB'$ be the coheart of the co-t-structure $(\sA',\sB')$. By Theorem~\ref{thm:co-hrs}, $\cX = \sB \cap \Sigma \sA'$ and $\cY = \sB' \cap \Sigma^2 \sA$. 
Hence $\cW = \cX \cap \cY = \sS' \cap \sC$. But since $\sS' = \Sigma \sA' \cap \sB' \subseteq \Sigma^2 \sA \cap \sB = \sC$, we have $\cW = \sS'$.
\end{proof}

We finish this section with a straightforward example illustrating Theorem~\ref{thm:co-hrs}.

\begin{example}
Let $\kk$ be a field and let $A_3$ be the equi-oriented Dynkin diagram of type $A_3$. In the diagram below we show the indecomposable objects in the Auslander--Reiten quiver of $\sT = \Db(\kk A_3)$; note that we suppress the arrows in the AR quiver. The diagram depicts a co-t-structure $(\sA,\sB)$, its extended coheart $\sC = \Sigma^2 \sA \cap \sB$, an intermediate co-t-structure $(\sA',\sB')$ such that $\sA \subseteq \sA' \subseteq \Sigma \sA$, and a complete cotorsion pair $(\cX,\cY) = (\sB \cap \Sigma \sA', \sB' \cap \Sigma^2 \sA)$.
%We observe that the core $\cW = \cX \cap \cY$ of the complete cotorsion pair is the the coheart $\sS' = \Sigma \sA' \cap \sB'$ of the intermediate co-t-structure $(\sA',\sB')$.
We highlight these objects in the diagram as follows.

\begin{center}
\begin{tikzpicture}
%\draw[help lines] (0,-1) grid [step=0.5cm] (14,3); 

% \cX
\fill[rounded corners,pattern=north west lines] (4.5,-0.25) -- (6.5,-0.25) -- (6, 1) -- (6.5, 2.25) -- (5.5, 2.25) -- cycle;
%\draw[rounded corners] (4.5,-0.25) -- (6.5,-0.25) -- (6, 1) -- (6.5, 2.25) -- (5.5, 2.25) -- cycle;
%\fill[gray!50] (8,0) circle (2.5mm);
%\draw (8,0) circle (2.5mm);
\fill[rounded corners,pattern=north west lines] (7.2, 0.5) -- (8.2, 0.5) -- (8.5,-0.25) -- (7.5,-0.25) -- cycle; 

% \cY first attempt
%\draw[rounded corners] (5.175,1.1) -- (5.875,-0.3) -- (6.3,-0.1) -- (5.6,1.3) -- cycle;
%\fill[rounded corners,pattern=north east lines,pattern color=gray!50] (5.175,1.1) -- (5.875,-0.3) -- (6.3,-0.1) -- (5.6,1.3) -- cycle;
%\fill[rounded corners,pattern=north east lines,pattern color=gray!50] (6.675,2.1) -- (7.875,-0.3) -- (8.3,-0.1) -- (7.1,2.3) -- cycle; 
%\fill[rounded corners,pattern=north east lines] (5.175,1.1) -- (5.875,-0.3) -- (6.3,-0.1) -- (5.6,1.3) -- cycle;
%\fill[rounded corners,pattern=north east lines] (6.675,2.1) -- (7.875,-0.3) -- (8.3,-0.1) -- (7.1,2.3) -- cycle; 

% \cY
\fill[rounded corners,pattern=north east lines] (5.7,-0.25) -- (6.5,-0.25) -- (5.9, 1.25)  -- (5, 1.25) -- cycle;
\fill[rounded corners,pattern=north east lines] (6.5, 2.25) -- (7.5, 2.25) -- (8.5,-0.25) -- (7.5,-0.25) -- cycle; 

% bottom row, \sA'
\foreach \x in {0,1,...,4} \foreach \y in {0}
%  { \fill[gray!30] (\x - 0.1, \y + 0.1) -- (\x + 0.1, \y + 0.1) -- (\x + 0.1, \y - 0.1) -- (\x - 0.1, \y - 0.1) -- cycle; }
  { \fill[green!30] (\x - 0.1, \y + 0.1) -- (\x + 0.1, \y + 0.1) -- (\x + 0.1, \y - 0.1) -- (\x - 0.1, \y - 0.1) -- cycle; }

% middle row, \sA'
\foreach \x in {0.5,1.5,...,3.5} \foreach \y in {1}
%  { \fill[gray!30] (\x - 0.1, \y + 0.1) -- (\x + 0.1, \y + 0.1) -- (\x + 0.1, \y - 0.1) -- (\x - 0.1, \y - 0.1) -- cycle; }
  { \fill[green!30] (\x - 0.1, \y + 0.1) -- (\x + 0.1, \y + 0.1) -- (\x + 0.1, \y - 0.1) -- (\x - 0.1, \y - 0.1) -- cycle; }

% top row, \sA'
\foreach \x in {0,1,...,3} \foreach \y in {2}
%  { \fill[gray!30] (\x - 0.1, \y + 0.1) -- (\x + 0.1, \y + 0.1) -- (\x + 0.1, \y - 0.1) -- (\x - 0.1, \y - 0.1) -- cycle; }
%\fill[gray!30] (4, 2 + 0.1) -- (4 + 0.1, 2 - 0.1) -- (4 - 0.1, 2 - 0.1) -- cycle;
%\fill[gray!30] (6,2) circle (1mm);
  { \fill[green!30] (\x - 0.1, \y + 0.1) -- (\x + 0.1, \y + 0.1) -- (\x + 0.1, \y - 0.1) -- (\x - 0.1, \y - 0.1) -- cycle; }
\fill[green!30] (4, 2 + 0.1) -- (4 + 0.1, 2 - 0.1) -- (4 - 0.1, 2 - 0.1) -- cycle;
\fill[green!30] (6,2) circle (1mm);

% bottom row, \sB'
\foreach \x in {8,9,...,14} \foreach \y in {0}
% { \fill[gray!50] (\x, \y) circle (1mm); }
%\fill[gray!50] (6,0) circle (1mm);
 { \fill[red!50] (\x, \y) circle (1mm); }
\fill[red!50] (6,0) circle (1mm);

% middle row, \sB'
\foreach \x in {7.5,8.5,...,13.5} \foreach \y in {1}
%  { \fill[gray!50] (\x, \y) circle (1mm); }
%\fill[gray!50] (5.5,1) circle (1mm);  
  { \fill[red!50] (\x, \y) circle (1mm); }
\fill[red!50] (5.5,1) circle (1mm);  

% top row, \sB'
\foreach \x in {7,8,...,14} \foreach \y in {2}
%  { \fill[gray!50] (\x, \y) circle (1mm); }
  { \fill[red!50] (\x, \y) circle (1mm); }  

% bottom row, \sA
\foreach \x in {0,1,...,4} \foreach \y in {0}
  { \draw (\x - 0.1, \y + 0.1) -- (\x + 0.1, \y + 0.1) -- (\x + 0.1, \y - 0.1) -- (\x - 0.1, \y - 0.1) -- cycle; }

% middle row, \sA
\foreach \x in {0.5,1.5,...,3.5} \foreach \y in {1}
  { \draw (\x - 0.1, \y + 0.1) -- (\x + 0.1, \y + 0.1) -- (\x + 0.1, \y - 0.1) -- (\x - 0.1, \y - 0.1) -- cycle; }

% top row, \sA
\foreach \x in {0,1,...,3} \foreach \y in {2}
  { \draw (\x - 0.1, \y + 0.1) -- (\x + 0.1, \y + 0.1) -- (\x + 0.1, \y - 0.1) -- (\x - 0.1, \y - 0.1) -- cycle; }  

% bottom row, \sB
\foreach \x in {5,6,...,14} \foreach \y in {0}
 { \draw (\x, \y) circle (1mm); }

% middle row, \sB
\foreach \x in {5.5,6.5,7.5,...,13.5} \foreach \y in {1}
  { \draw (\x, \y) circle (1mm); }

% top row, \sB
\foreach \x in {6,7,...,14} \foreach \y in {2}
  { \draw (\x, \y) circle (1mm); }

% neither here nor there
\draw (5, 2 + 0.1) -- (5 + 0.1, 2 - 0.1) -- (5 - 0.1, 2 - 0.1) -- cycle;
\draw (4.5, 1 + 0.1) -- (4.5 + 0.1, 1 - 0.1) -- (4.5 - 0.1, 1 - 0.1) -- cycle; 
\draw (4, 2 + 0.1) -- (4 + 0.1, 2 - 0.1) -- (4 - 0.1, 2 - 0.1) -- cycle; 
 
% extended coheart region
\draw[rounded corners,thick] (5.5, 2.25) -- (7.5, 2.25) -- (8.5,-0.25) -- (4.5,-0.25) -- cycle;
\draw[fill=white] (5,0) circle (1mm);
\end{tikzpicture}
\end{center}

\noindent
\begin{center}
\begin{tabular}{@{} p{0.15\textwidth} @{} p{0.73\textwidth} @{}}
symbols                                                     & indecomposable objects of \\ \midrule
\tikz{\draw (0.1, 0.1) rectangle (-0.1,-0.1)} & the aisle $\sA$ of the co-t-structure $(\sA,\sB)$ \\
\tikz{\draw (0,0) circle (1mm)}                    & the co-aisle $\sB$ of the co-t-structure $(\sA,\sB)$ \\
\tikz{\draw (7, 2 + 0.1) -- (7 + 0.1, 2 - 0.1) -- (7 - 0.1, 2 - 0.1) -- cycle} & neither the aisle $\sA$ nor the co-aisle $\sB$ of the co-t-structure $(\sA,\sB)$ \\
\tikz{\draw[fill=green!30] (0.1, 0.1) rectangle (-0.1,-0.1)},  \tikz{\draw[fill=green!30] (7, 2 + 0.1) -- (7 + 0.1, 2 - 0.1) -- (7 - 0.1, 2 - 0.1) -- cycle} or
\tikz{\draw[fill=green!30] (0,0) circle (1mm)}   & the aisle $\sA'$ of the co-t-structure $(\sA',\sB')$ \\
\tikz{\draw[fill=red!30] (0,0) circle (1mm)}    & the co-aisle $\sB'$ of the co-t-structure $(\sA',\sB')$ \\
\tikz{\fill[pattern=north west lines] (-0.25,-0.25) rectangle (0.25,0.25); \draw[fill=white] (0,0) circle (1mm)} & the cotorsion class $\cX$ of the complete cotorsion pair $(\cX,\cY)$ \\
\tikz{\fill[pattern=north east lines] (-0.25,-0.25) rectangle (0.25,0.25); \draw[fill=white] (0,0) circle (1mm)} & the cotorsionfree class $\cY$ of the complete cotorsion pair $(\cX,\cY)$ \\
\multicolumn{2}{@{} l}{The extended coheart $\sC$ is indicated by the outlined region:
                    \scalebox{0.25}{\tikz{\draw[rounded corners,very thick] (5.5, 2.25) -- (7.5, 2.25) -- (8.5,-0.25) -- (4.5,-0.25) -- cycle;}}}
\end{tabular}
\end{center}
\end{example}

%===============================================================
% SECTION
\section{Cotorsion pairs versus torsion pairs} \label{sec:cotorsion-torsion}
%===============================================================

The aim of this section is to provide a direct proof of Theorem~\ref{intro:cotorsion-torsion}. We restrict to the following setup so that we can apply the setup of Section~\ref{sec:yoneda}.

\begin{setup} \label{setup}
From now on we will assume that $\sT$ is essentially small, Hom-finite, $\kk$-linear and Krull-Schmidt, where $\kk$ is a commutative noetherian ring. Note that in that case $\sT$ is automatically idempotent complete.
Let $\sS$ be a presilting subcategory of $\sT$ such that $\mod \sS$ is noetherian and set $\sC = \sS* \Sigma \sS$.
Note that, since $\sS$ is silting in $\thick \sS$, the subcategory $ \sS * \Sigma \sS$ is closed under summands and extensions.
\end{setup}

Under the assumptions of Setup~\ref{setup}, if $\sS = \add(s)$, then $\mod \sS \simeq \mod E$, where $E = \sT(s,s)$ is a noetherian ring, making $\mod \sS$ noetherian.

\begin{proposition} \label{prop:cotorsion-to-torsion}
Suppose that the hypotheses of Setup~\ref{setup} hold. Then the equivalence $F \colon \sC/\Sigma \sS \to \mod \sS$ induces a well-defined map
\begin{align*}
\Phi \colon \{ \text{cotorsion pairs in } \sC\} & \to \{ \text{torsion pairs in } \mod \sS\}, \\
(\cX,\cY)                                                  & \mapsto (\cT = F\cY, \cF = \cT\orth)
\end{align*}
which restricts to a well-defined map
\[
\Phi \colon 
\{\text{complete cotorsion pairs in } \sC \} \to
\{\text{functorially finite torsion pairs in } \mod \sS \}.
\]
\end{proposition}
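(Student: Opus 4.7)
The plan is to verify that $\cT = F\cY$ is a torsion class in $\mod \sS$---which suffices by noetherianity of $\mod \sS$, whence $(\cT, \cF) = (\cT, \cT\orth)$ is automatically a torsion pair---and then to upgrade to functorial finiteness under the completeness hypothesis. I first check $\Sigma \sS \subseteq \cY$ and $\sS \subseteq \cX$, so that $F\cY$ makes sense as a subcategory of $\mod \sS$ (note $F(\Sigma \sS) = 0$): both inclusions follow from presiltingness, since for $x \in \cX \subseteq \sS * \Sigma \sS$ one has $\sT(x, \Sigma^2 \sS) = 0$, hence $\bE(x, \Sigma \sS) = 0$, so $\Sigma \sS \subseteq \cY$ (symmetrically for $\sS \subseteq \cX$). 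For closure of $F\cY$ under extensions: the two-term structure $s_1 \to s_0 \to t_3 \to \Sigma s_1$ of any $t_3 \in \sC$ together with $\sT(\sS, \Sigma \sC) = 0$ implies that $F$ induces a natural isomorphism $\bE(t_3, t_1) \cong \Ext^1_{\mod \sS}(Ft_3, Ft_1)$, so any extension $0 \to Ft_1 \to N \to Ft_3 \to 0$ with $t_1, t_3 \in \cY$ lifts to an $\bE$-triangle $t_1 \to u \to t_3$ in $\sC$ whose middle term $u$ lies in $\cY$ by $\bE$-orthogonality.

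Closure under quotients is the main obstacle. Given a surjection $\pi \colon Ft \onto M''$ with $t \in \cY$, I would construct a two-term presentation of $M''$ compatible with a chosen $s_1 \to s_0 \to t$. By noetherianity of $\mod \sS$, the submodule $K := \ker(Fs_0 \onto M'')$ is finitely generated, so equals the image of some $Fs_1' \to Fs_0$ with $s_1' \in \sS$; since $K$ contains the image of $Fs_1 \to Fs_0$, projectivity of $Fs_1$ gives a lift $s_1 \to s_1'$ making the left square commute, and this completes to a morphism of triangles with identity on $s_0$ and induced map $g \colon t \to t''$ realising $\pi$. The $3 \times 3$ lemma, together with $\mathrm{Cone}(s_0 \rightlabel{=} s_0) = 0$, identifies $\mathrm{Cone}(g)$ with $\Sigma c$ for $c := \mathrm{Cone}(s_1 \to s_1') \in \sS * \Sigma \sS = \sC$, giving an $\bE$-triangle $c \to t \to t'' \to \Sigma c$ in $\sC$. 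For $x \in \cX$, applying $\sT(x,-)$ to the underlying triangle in $\sT$, the terms $\sT(x, \Sigma t) = \bE(x, t) = 0$ (as $t \in \cY$) and $\sT(x, \Sigma^2 c) = 0$ (by presiltingness, using $x \in \sS * \Sigma \sS$ and $\Sigma^2 c \in \Sigma^2 \sS * \Sigma^3 \sS$) sandwich $\bE(x, t'')$ and force it to vanish; thus $t'' \in \cY$ and $M'' \in F\cY$. This establishes that $\cT = F\cY$ is a torsion class and that $(\cT, \cT\orth)$ is a torsion pair.

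For functorial finiteness under completeness, I verify that $\cT$ is covariantly finite (contravariant finiteness being automatic from the torsion truncation sequence). For $M \in \mod \sS$ pick $t \in \sC$ with $Ft = M$ and take the $\bE$-triangle $t \to y \to x$ from condition (2) of Definition~\ref{DefCotorsion}. Applying $\sT(-, y')$ for $y' \in \cY$ and using $\bE(x, \cY) = 0$ shows $t \to y$ is a left $\cY$-approximation in $\sC$; since $F$ is essentially surjective onto $\mod \sS$ and sends $\cY$ onto $\cT$, this produces a left $\cT$-approximation $M \to Fy$ in $\mod \sS$. Hence $(\cT, \cF)$ is functorially finite.
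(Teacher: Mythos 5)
Your overall architecture matches the paper's: establish that $\cT = F\cY$ is closed under quotients and extensions, invoke noetherianity of $\mod \sS$ to get a torsion pair, and then read off covariant finiteness of $\cT$ from the completeness triangle $c \to y \to x$ exactly as the paper does. Your quotient-closure argument is a genuinely different and perfectly valid route: by constructing the quotient at the level of two-term presentations you guarantee from the outset that $t''$ and $c = \mathrm{Cone}(s_1 \to s_1')$ lie in $\sC$, and the vanishing of $\sT(x,\Sigma t)$ and $\sT(x,\Sigma^2 c)$ then forces $\bE(x,t'')=0$. The paper instead lifts the epimorphism directly using fullness of $F$ on $\sC$ and must then verify a posteriori that the cocone of $y \to v$ lies in $\sS * \Sigma\sS$; your version trades that verification for a routine diagram chase with projective presentations.

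The gap is in the extension-closure step. The asserted natural isomorphism $\bE(t_3,t_1) \cong \Ext^1_{\mod\sS}(Ft_3,Ft_1)$ for $t_1,t_3 \in \sC$ is false. For a counterexample take $\kk$ a field, $A = \kk[x]/(x^2)$, $\sT = \Kb(\proj A)$, $\sS = \add A$, $t_3 = (A \xrightarrow{x} A) \in \sS * \Sigma\sS$ and $t_1 = A$: then $\bE(t_3,t_1) = \sT(t_3,\Sigma A) \cong A/(x) \cong \kk$ is nonzero, while $Ft_3 \cong \kk$, $Ft_1 \cong A$ and $\Ext^1_A(\kk,A) = 0$ since $A$ is self-injective. (In general one only gets a natural monomorphism $\Ext^1_{\mod\sS}(Ft_3,Ft_1) \into \bE(t_3,t_1)$, reflecting the familiar gap between rigidity and $\tau$-rigidity.) What your argument actually needs is that every extension $0 \to Ft_1 \to N \to Ft_3 \to 0$ is the image under $F$ of some $\bE$-triangle $t_1 \to u \to t_3$ in $\sC$ with $Fu \cong N$; this is true, but it does not follow from ``the two-term structure together with $\sT(\sS,\Sigma\sC)=0$'': for an arbitrary $\delta \in \sT(t_3,\Sigma t_1)$ the induced map $Ft_1 \to Fu$ need not be injective (its kernel is the image of $F(\Sigma^{-1}\delta)$), so one must produce $\delta$ carefully — for instance by the same presentation-level construction you use for quotients (lift $Fs_0 \to Ft_3$ through $N$, restrict to the syzygy, lift to a morphism $s_1 \to t_1$, and apply the octahedral axiom). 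This is exactly the work the paper's two octahedra in the extension step are doing. As written, your proof of extension-closure does not go through, although it is repairable along these lines.
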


\begin{proof}
Let $(\cX,\cY)$ be a cotorsion pair in $\sC$. We claim that the essential image $\cT = F\cY$ is a torsion class in $\mod \sS$. Since $\mod \sS$ is  noetherian, by \cite[Proposition 3.5]{Liu-Stanley} it is enough to show that $\cT$ is closed under quotients and extensions.

We start by showing that $\cT = F \cY$ is closed under quotients. Consider an exact sequence $t \rightlabel{\phi} u \to 0$ in $\mod \sS$ with $t \in \cT$. Lifting this to  
$\sC$ via $F$, there are objects $y\in \cY$ and $v\in \sC$ and a morphism $f \colon y \to v$ such that $Fy = t$, $Fv = u$ and $Ff = \phi$. Completing the morphism $f$ to a distinguished triangle in $\sT$ gives
\[
c \too y \rightlabel{f} v \rightlabel{g} \Sigma c.
\]
Applying $F$ to this triangle, we get the exact sequence
\[
\sT(-,y)|_\sS \rightlabel{\sT(-,f)|_\sS} \sT(-,v)|_\sS  \rightlabel{\sT(-,g)|_\sS} \sT(-,\Sigma c)|_\sS \too \sT(-,\Sigma y)|_\sS.
\]
Since $\sT(-,f)|_\sS = \phi$ is an epimorphism, we have $\sT(-,g)|_\sS = 0$. Moreover, $\Sigma y \in \Sigma \sS * \Sigma^2 \sS$ so that $\sS$ presilting implies that $\sT(-,\Sigma y)|_\sS=0$. 
Hence, $\sT(-,\Sigma c)|_\sS = 0$. In particular, it follows that $\Sigma c \in (\sS * \Sigma \sS * \Sigma^2 \sS) \cap \sS\orth$, in which case we get that $c \in \sS * \Sigma \sS$. 
Now applying $\sT(\cX,-)$ to the triangle above gives $\sT(\cX,\Sigma v) = 0$, which means $v \in \cY$ because $(\cX,\cY)$ is a cotorsion pair. Hence, $u \simeq Fv \in \cT$ and $\cT$ is closed under quotients. 

Next we show that $\cT$ is closed under extensions. Consider a short exact sequence 
\[
0 \too t' \rightlabel{\phi} t \too t'' \too 0
\]
in $\mod \sS$ with $t',t'' \in \cT$. Lift the morphism $\phi \colon t' \to t$ to $\sC$ to obtain a morphism $f \colon y' \to y$ such that $y'\in\cY$, $Ff = \phi$, $Fy' = t'$ and $Fy = t$. Extend $f$ to a distinguished triangle to get
\[
y' \rightlabel{f} y \too c \too \Sigma y'.
\]
Applying the restricted Yoneda functor to this triangle and noting that $\sT(-,\Sigma y')|_\sS = 0$ gives a commutative diagram,
\[
\xymatrix@!R=5px{
Fy' \ar[r]^-{Ff} \ar@{=}[d] & Fy \ar[r] \ar@{=}[d] & Fc \ar[r] & 0, \\
t' \ar[r]_-{\phi}                  & t \ar[r]                     & t'' \ar[r] & 0                  
}
\]
whence $Fc \simeq t''\simeq Fy''$ for some $y''\in \cY$. 

If $c \in \cY \subseteq \sC$, then we are done. However, we do not know this to be the case. Reading off from the triangle above, $c \in \sS * \Sigma \sS * \Sigma^2 \sS$, so we can consider a decomposition $m \to c \to n \to \Sigma n$ in which $m \in \sS * \Sigma \sS$ and $n \in \Sigma^2 \sS$. 
Consider the octahedral diagram obtained from the two triangles below.
\[
\xymatrix@!R=5px{
                             & m \ar@{=}[r] \ar[d]   & m \ar[d]                     & \\
y \ar[r] \ar@{=}[d] & c  \ar[r] \ar[d]            & \Sigma y' \ar[r] \ar[d] & \Sigma y \ar@{=}[d] \\
y \ar[r]                  & n \ar[r] \ar[d]             & \Sigma e \ar[r] \ar[d]  & \Sigma y \\
                            & \Sigma m \ar@{=}[r] & \Sigma m                   &
}
\]
Applying the restricted Yoneda functor to a rotation of the lower horizontal triangle gives an isomorphism $Fe \rightiso Fy$.
Rotating the right-hand vertical triangle gives
\[
y' \to e \to m \to \Sigma y',
\]
showing that $e \in \sC$.
Consider the octahedral diagram obtained using this triangle together with the triangle $e \to y \to n \to \Sigma e$.
\[
\xymatrix@!R=5px{
                             & \Sigma^{-1} n \ar@{=}[r] \ar[d]   & \Sigma^{-1} n \ar[d]   & \\
y' \ar[r] \ar@{=}[d] & e  \ar[r] \ar[d]                             & m \ar[r] \ar[d]             & \Sigma y' \ar@{=}[d] \\
y' \ar[r]                  & y \ar[r] \ar[d]                               & c \ar[r] \ar[d]              & \Sigma y' \\
                            & n \ar@{=}[r]                                & n                                &
}
\]
Note that from the previous diagram $c$ is indeed isomorphic to the cone of the map $y'\rightarrow y$. Applying the restricted Yoneda functor to the two horizontal triangles gives,
\[
\xymatrix@!R=5px{
Fy' \ar[r] \ar@{=}[d] & Fe \ar[r] \ar[d]^-{\sim} & Fm \ar[r] \ar[d]^-{\sim} & 0 \\
Fy' \ar[r]                  & Fy \ar[r]                       & Fc \ar[r]                       & 0.
}
\]
In particular, there is an isomorphism $Fy'' \rightiso Fm$, so by the argument showing $\cT$ is closed under quotients, we see that $m$ lies in $\cY$. Since $\cY$ is closed under extensions, it follows that $e \in \cY$. Hence $Fe \simeq Fy \simeq t$, showing that $t \in \cT$, as required.

Finally, we check that the map induced by the restricted Yoneda functor restricts as claimed. We need to show that $\cT = F\cY$ is covariantly finite when $(\cX,\cY)$ is a complete cotorsion pair.
Let $m \in \mod \sS$. Suppose $c \in \sC$ is such that $Fc \simeq m$. Consider a decomposition triangle of $c$ with respect to the complete cotorsion pair $(\cX,\cY)$,
\[
c \rightlabel{f} y \too x \too \Sigma c.
\]
Note that $f \colon c \to y$ is a left $\cY$-approximation of $c$ in $\sC$. 
We claim that $Ff \colon m \to Fy$ is a left $\cT$-approximation of $m$ in $\mod \sS$. Suppose $\phi \colon m \to t$ is a morphism in $\mod \sS$ with $t \in \cT$. Then there exist $y' \in \cY$ and $g \colon c \to y'$ such that $Fy' \simeq t$ and $Fg \simeq \phi$. Since $f$ is a left $\cY$-approximation of $c$ in $\sC$, there exists $h \colon y \to y'$ such that $g = hf$. Applying $F$ to this composition gives $\phi \simeq Fg = Fh Ff$, that is $Ff$ is a left $\cT$-approximation, as required. 
\end{proof}

The next lemma provides a useful criterion to detect when a cotorsion pair is complete.

\begin{lemma} \label{lem:complete}
Let $\sS$ be a presilting subcategory of a triangulated category $\sT$ and $\sC = \sS * \Sigma \sS$. 
Suppose $(\cX,\cY)$ is a cotorsion pair in $\sC$, then $(\cX,\cY)$ is a complete cotorsion pair in $\sC$ if and only if for each object $s$ of $\sS$ there exists a triangle $s \to y \to x \to \Sigma s$ with $x \in \cX$ and $y \in \cY$.
\end{lemma}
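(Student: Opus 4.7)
The forward direction is immediate: if $(\cX,\cY)$ is complete, Definition~\ref{DefCotorsion}\ref{E-triangle-CY} applied to $s \in \sS \subseteq \sC$ gives the required triangle. So the content is the converse.

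My first move is to record two preliminary facts that depend only on $\sS$ being presilting and $(\cX,\cY)$ being a cotorsion pair in $\sC = \sS * \Sigma\sS$: namely $\sS \subseteq \cX$ and $\Sigma\sS \subseteq \cY$. To see the first, pick $y \in \cY \subseteq \sC$, write $y$ in a defining triangle $s' \to y \to \Sigma s'' \to \Sigma s'$ with $s',s'' \in \sS$, apply $\sT(s,\Sigma-)$ for $s \in \sS$, and observe that the outer terms $\sT(s,\Sigma s')$ and $\sT(s,\Sigma^2 s'')$ vanish by presilting; hence $\bE(s,\cY)=0$ and $s \in \cX$. The inclusion $\Sigma\sS \subseteq \cY$ is proved symmetrically.

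Next, for arbitrary $c \in \sC$, fix a triangle $s_1 \to s_0 \to c \to \Sigma s_1$ with $s_0,s_1 \in \sS$. To produce condition \ref{E-triangle-CY}, I take the hypothesised triangle $s_0 \to y_0 \to x_0 \to \Sigma s_0$ and apply the octahedral axiom to the composition $s_1 \to s_0 \to y_0$. This yields a triangle $c \to c_\alpha \to x_0 \to \Sigma c$, where $c_\alpha$ sits in $s_1 \to y_0 \to c_\alpha \to \Sigma s_1$. Since $y_0 \in \cY \subseteq \sC$ and $\Sigma s_1 \in \Sigma\sS \subseteq \sC$, extension-closure of $\sC$ forces $c_\alpha \in \sC$. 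Applying $\sT(\cX,\Sigma-)$ to the same triangle and noting that $\bE(\cX,y_0) = 0$ and $\bE(\cX,\Sigma s_1) = 0$ (using $\Sigma s_1 \in \cY$) gives $\bE(\cX,c_\alpha)=0$, so $c_\alpha \in \cY$ as required.

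For condition \ref{E-triangle-XC} I run the dual manoeuvre. Take the hypothesis triangle $s_1 \to y_1 \to x_1 \to \Sigma s_1$ for $s_1$, rotate it to $\Sigma^{-1} x_1 \to s_1 \to y_1 \to x_1$, and apply the octahedral axiom to the composition $\Sigma^{-1} x_1 \to s_1 \to s_0$. The resulting octahedron produces a middle object $w$ fitting simultaneously into triangles $s_0 \to w \to x_1 \to \Sigma s_0$ and $y_1 \to w \to c \to \Sigma y_1$. Since $s_0 \in \sS \subseteq \sC$ and $x_1 \in \cX \subseteq \sC$, extension-closure gives $w \in \sC$; applying $\sT(-,\Sigma\cY)$ to $s_0 \to w \to x_1 \to \Sigma s_0$ and using $\bE(s_0,\cY)=0=\bE(x_1,\cY)$ then shows $w \in \cX$, completing the construction.

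The main obstacle is purely bookkeeping: after each octahedral step one has to verify that the newly produced object actually lies in $\sC$, not merely in $\sT$. This is handled uniformly by the extension-closure of $\sC = \sS * \Sigma\sS$ (Setup~\ref{setup}) together with the two preliminary inclusions $\sS \subseteq \cX$ and $\Sigma\sS \subseteq \cY$; once these are in place, the octahedral constructions assemble the generator-level hypothesis into cotorsion approximations for every $c \in \sC$.
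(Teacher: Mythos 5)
Your proposal is correct and follows essentially the same route as the paper: decompose $c$ via $s_1 \to s_0 \to c \to \Sigma s_1$ and apply the octahedral axiom twice to the hypothesised triangles for $s_0$ and $s_1$, using $\sS \subseteq \cX$, $\Sigma\sS \subseteq \cY$ and extension-closure to identify the new objects. The only difference is cosmetic: you spell out the inclusions $\sS \subseteq \cX$ and $\Sigma\sS \subseteq \cY$ and the membership of the cones in $\sC$, which the paper leaves implicit.
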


\begin{proof}
If $(\cX,\cY)$ is a complete cotorsion pair in $\sC$ then by definition such a triangle exists for $s \in \sS$ because such a triangle exists for each $c \in \sC$.

Conversely, suppose $(\cX,\cY)$ is a cotorsion pair in $\sC$ and that for each $s \in \sS$ there is a triangle $s \to y \to x \to \Sigma s$ with $x \in \cX$ and $y \in \cY$. Let $c \in \sC$ and take a decomposition triangle, $s_2 \to s_1 \to c \to \Sigma s_2$, and the triangle $s_2 \to y_2 \to x_2 \to \Sigma s_2$ given by the assumption. Applying the octahedral axiom to these triangles gives the following diagram.
\[
\xymatrix@!R=5px{
                                                & \Sigma^{-1} x_2 \ar@{=}[r] \ar[d]  & \Sigma^{-1} x_2 \ar[d] & \\
\Sigma^{-1} c \ar[r] \ar@{=}[d] & s_2 \ar[r] \ar[d]                              & s_1 \ar[r] \ar[d]            & c \ar@{=}[d] \\
\Sigma^{-1} c \ar[r]                  & y_2 \ar[r] \ar[d]                              & e \ar[r] \ar[d]                 & c \\
                                                & x_2 \ar@{=}[r]                               & x_2                              &
}
\]
We have $e \in \cX$ since $\sS \subseteq \cX$, because $(\cX,\cY)$ is a cotorsion pair. Thus, the triangle $y_2 \to e \to c \to \Sigma y_2$ provides the second triangle required for completeness in Definition~\ref{DefCotorsion}.

To obtain the first triangle in Definition~\ref{DefCotorsion}, we use the octahedral axiom again together with the triangle $s_1 \to y_1 \to x_1 \to \Sigma s_1$ given by the assumption:
\[
\xymatrix@!R=5px{
                                 & \Sigma^{-1} x_1 \ar@{=}[r] \ar[d]  & \Sigma^{-1} x_1 \ar[d] & \\
s_2 \ar[r] \ar@{=}[d] & s_1 \ar[r] \ar[d]                              & c \ar[r] \ar[d]                & \Sigma s_2 \ar@{=}[d] \\
s_2 \ar[r]                  & y_1 \ar[r] \ar[d]                              & d \ar[r] \ar[d]                & \Sigma s_2 \\
                                & x_1 \ar@{=}[r]                               & x_1                              &
}
\]
Analogously, we have $d \in \cY$ because $\Sigma \sS \subseteq \cY$, making $c \to d \to x_1 \to \Sigma c$ the required triangle.
\end{proof}

\begin{remark} \label{rem:complete}
We make two observations regarding Lemma~\ref{lem:complete}.
\begin{enumerate}
\item \label{rem:complete-1} Lemma~\ref{lem:complete}, in fact, holds in the case that $(\cX,\cY)$ is an Ext-orthogonal pair of subcategories of $\sC$ closed under extensiona and direct summands such that $\sS \subseteq \cX$ and $\Sigma \sS \subseteq \cY$.
\item Let $\sS$, $\sC$ and $(\cX,\cY)$ be as in the statement of Lemma~\ref{lem:complete}.
In the triangle $s \to y \to x \to \Sigma s$ we observe that since $\sS \subseteq \cX$ and $\Sigma \sS \subseteq \cY$, we have $x,y \in \cX \cap \cY$. In the context of Section \ref{sec:co-hrs} this provides a decomposition of $\sS$ in $\Sigma^{-1}\sS' * \sS'$, where $\sS'=\cX \cap \cY$ is the coheart of the co-t-structure $(\sA',\sB')$.
\end{enumerate}
\end{remark}

We now define an inverse to the restricted map in Proposition~\ref{prop:cotorsion-to-torsion}.

\begin{proposition} \label{prop:torsion-to-cotorsion}
Let $\sS$ be a presilting subcategory of $\sT$ and $\sC = \sS * \Sigma \sS$. There is a well-defined map
\[
\Theta \colon \{ \text{functorially finite torsion pairs in } \mod \sS \}  \to \{ \text{complete cotorsion pairs in } \sC\}. 
\]
\end{proposition}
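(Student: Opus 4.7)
My plan is to define
\[
\cY := \{y \in \sC \mid Fy \in \cT\} \qquad\text{and}\qquad \cX := \{x \in \sC \mid \bE(x,y) = 0 \text{ for all } y \in \cY\},
\]
set $\Theta((\cT,\cF)) := (\cX,\cY)$, and to apply Lemma~\ref{lem:complete} together with Remark~\ref{rem:complete}(1) to conclude $(\cX,\cY)$ is a complete cotorsion pair. The preparatory hypotheses---closure under summands and extensions, the inclusions $\sS \subseteq \cX$ (because $\sT(\sS, \Sigma\cY) \subseteq \sT(\sS, \Sigma\sS * \Sigma^2\sS) = 0$ by the presilting property) and $\Sigma\sS \subseteq \cY$ (because $F(\Sigma\sS) = 0 \in \cT$), and the Ext-orthogonality $\bE(\cX,\cY) = 0$ by construction---I expect to dispatch as routine checks.

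The substantive part will be constructing, for each $s \in \sS$, a distinguished triangle $s \to y \to x \to \Sigma s$ with $y \in \cY$ and $x \in \cX$. I would proceed as follows: using the covariant finiteness of $\cT$ in $\mod\sS$, select a minimal left $\cT$-approximation $\alpha \colon Fs \to t_0$; lift it through the equivalence $F\colon \sC/\Sigma\sS \simeq \mod\sS$ to a morphism $\beta \colon s \to y$ in $\sC$ with $Fy = t_0 \in \cT$; and complete $\beta$ to a distinguished triangle in $\sT$. The third term will lie in $\sS * \Sigma\sS = \sC$ (using $\sS*\sS = \sS$ and $\Sigma\sS * \Sigma\sS = \Sigma\sS$ from the presilting property), and $y \in \cY$ by construction.

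The hard part will be verifying $x \in \cX$. Applying the cohomological functor $\sT(-,\Sigma y')$ for $y' \in \cY$ and exploiting $\sT(s, \Sigma y') = 0$ will produce an exact sequence $\Hom_{\mod\sS}(Fs, Fy') \to \bE(x,y') \to \bE(y,y') \to 0$. The image of the first map will vanish because every morphism $Fs \to Fy'$ (with $Fy' \in \cT$) factors through $\alpha$ by its defining universal property, and the residual lifting ambiguity is absorbed using $\sT(x,\Sigma^2\sS) = 0$, killing the composition with the connecting morphism $\partial \colon x \to \Sigma s$. So the question reduces to $\bE(y,y') = 0$ for all $y' \in \cY$, equivalently, to $y$ lying in the core $\cX \cap \cY$. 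This is the delicate step and dictates that $\alpha$ must land in an \emph{Ext-projective} object of $\cT$---morally a summand of the support-$\tau$-tilting-like object associated to $\cT$---whose existence I plan to extract from the functorial finiteness of $(\cT,\cF)$ by a Wakamatsu-type argument for minimal approximations in the noetherian abelian category $\mod\sS$.
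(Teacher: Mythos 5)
Your construction of $(\cX,\cY)$ and your use of Lemma~\ref{lem:complete} via Remark~\ref{rem:complete}(1) match the paper, as does the lifting of a (minimal) left $\cT$-approximation $\alpha\colon Fs\to t_0$ to $\beta\colon s\to y$ and the completion to a triangle $s\to y\to x\to\Sigma s$ in $\sC$. The gap is in your final step. Your exact-sequence analysis correctly shows that the map $\sT(s,y')\cong\Hom_{\mod\sS}(Fs,Fy')\to\bE(x,y')$ has zero image (because every $g\colon s\to y'$ with $y'\in\cY$ factors through $\beta$, using fullness of $F$ and the vanishing of maps factoring through $\Sigma\sS$), so indeed $\bE(x,y')\cong\bE(y,y')$. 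But reducing $x\in\cX$ to ``$y$ is Ext-projective in $\cY$'' and then invoking ``a Wakamatsu-type argument for minimal approximations in the noetherian abelian category $\mod\sS$'' does not close the argument: Wakamatsu's lemma in $\mod\sS$ would at best control $\Ext^1_{\mod\sS}(-,\cT)$ or $\Ext^1_{\mod\sS}(Fy,-)$, whereas what you need to kill is $\bE(y,y')=\sT(y,\Sigma y')$, an extension group computed in $\sT$. For two-term objects these two groups are genuinely different (their comparison is precisely the content of the $\tau$-rigidity versus presilting correspondence of Adachi--Iyama--Reiten and \cite{IJY}), so Ext-projectivity of $t_0$ in $\cT$ does not yield $\bE(y,\cY)=0$ without importing that whole machinery, which your proposal does not set up.

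The repair is to stay in the triangulated category: the factorization you already established (every $g\colon s\to y'$ with $y'\in\cY$ factors as $h\beta$) says exactly that $\beta$ is a left $\cY$-approximation of $s$ in $\sC$. Replace it by a minimal left $\cY$-approximation (possible in the Krull--Schmidt setting), check that $\cY$ is closed under extensions in $\sT$ (this uses that $\cT$ is closed under quotients and extensions, applied to $\im Ff$ -- it is a short argument but not purely formal, so it should not be waved through with the ``routine checks''), and then apply the triangulated Wakamatsu lemma, Lemma~\ref{lem:wakamatsu}, to the rotated triangle $\Sigma^{-1}x\to s\to y\to x$ to conclude $\Sigma^{-1}x\in{}\orth\cY$, i.e.\ $x\in{}\orth(\Sigma\cY)\cap\sC=\cX$. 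This sidesteps the question of whether $\bE(y,\cY)=0$ entirely (that fact is then a consequence, recorded in Remark~\ref{rem:complete}(2), rather than an input).
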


\begin{proof}
Let $(\cT,\cF)$ be a functorially finite torsion pair in $\mod \sS$, $\cY = \{c \in \sC \mid Fc \in \cT\}$ and $\cX = {}\orth (\Sigma \cY)\cap \sC$.
The subcategories $\cY$ and $\cX$ are closed under direct summands, since $\cT$ is closed under direct summands and $\cX$ is defined as an orthogonal.
We set $\Theta\big( (\cT,\cF) \big) = (\cX,\cY)$.
We claim that $(\cX,\cY)$ is a complete cotorsion pair.
We start by showing that $\cY$ is closed under extensions (in $\sC$).

Let $y' \rightlabel{f} y \too y'' \too \Sigma y$ be an extension with $y', y'' \in \cY$. Applying $F$ to this $\bE$-triangle in $\sC$ gives an exact sequence $Fy' \rightlabel{Ff} Fy \too Fy'' \too 0$ in $\mod \sS$ since $\Sigma y' \in \Sigma \sC = \Sigma \sS * \Sigma^2 \sS$. We thus obtain a short exact sequence $0 \to \im Ff \to Fy \to Fy'' \to 0$. Since $\cT$ is a torsion class, it follows that $\im Ff \in \cT$ and $Fy \in \cT$. Hence, we obtain $y \in \cY$.

For $s \in \sS$ we will construct a triangle $s \to y \to x \to \Sigma s$ in which $y \in \cY$ and $x \in \cX$, which will allow us to apply Lemma~\ref{lem:complete} to conclude that $(\cX,\cY)$ is a complete cotorsion pair in $\sC$.
First, we need to check that Lemma~\ref{lem:complete} applies. Since $\cY \subseteq \sC$, we have that $\sT(\sS,\Sigma \cY) = 0$ so that $\sS \subseteq \cX$. Furthermore, if $s \in \sS$ then $F(\Sigma s) = 0 \in \cT$ so that $\Sigma s \in \cY$. By Remark~\ref{rem:complete}\eqref{rem:complete-1}, Lemma~\ref{lem:complete} applies.

Now let $s \in \sS$ and consider $Fs$ and take a left $\cT$-approximation, $\phi \colon Fs \to t$, in $\mod \sS$. Let $y$ and $f \colon s \to y$ be such that $Fy \simeq t$ and $Ff = \phi$. We claim that $f \colon s \to y$ is a left $\cY$-approximation of $s$. Consider a morphism $g \colon s \to y'$ with $y' \in \cY$. Applying $F$ to $f$ and $g$ gives a diagram,
\[
%\xymatrix@!R=4px{
\xymatrix{
Fs \ar[rr]^-{Ff = \phi} \ar[drr]_-{Fg = \theta} & & Fy \simeq t \, ,\ar@{-->}[d]^-{\psi} \\
                                                                    & & Fy' 
}
\]
where the morphism $\psi \colon t \to Fy'$ exists because $\phi$ is a left $\cT$-approximation.
Since the functor $F$ is full, there exists $h \colon y \to y'$ such that $Fh = \psi$.
We claim that $g = hf$. Applying $F$ to $g - hf$ shows that $g - hf = 0$ in $\sC/\Sigma \sS$. Hence, $g-hf$ factors through $\Sigma \sS$.
\[
\xymatrix@!R=4px{
s \ar[rr]^-{g - hf} \ar[dr] &                            & y' \\
                                     & \Sigma s' \ar[ur] &
}
\]
Hence, $g - hf = 0$ in $\sC$ since $\sS$ is presilting. 
It follows that $f \colon s \to y$ is a left $\cY$-approximation of $\sS$. Without loss of generality, we may assume that $f \colon s \to y$ is a minimal left $\cY$-approximation and extend it to a distinguished triangle, $s \rightlabel{f} y \too x \too \Sigma s$.
By the Wakamatsu lemma for triangulated categories, Lemma~\ref{lem:wakamatsu}, we see that $x \in {}\orth (\Sigma \cY) = \cX$.
\end{proof}

\begin{theorem} \label{thm:cotorsion-torsion}
Suppose the hypotheses of Setup~\ref{setup} hold.
Then, there is a bijection
\[
\{\text{complete cotorsion pairs in } \sC \} \bij
\{\text{functorially finite torsion pairs in } \mod \sS \}.
\]
\end{theorem}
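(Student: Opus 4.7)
The plan is to invoke the maps $\Phi$ and $\Theta$ already constructed in Propositions \ref{prop:cotorsion-to-torsion} and \ref{prop:torsion-to-cotorsion}: explicitly, $\Phi$ sends a cotorsion pair $(\cX,\cY)$ in $\sC$ to $(F\cY,\,(F\cY)\orth)$, while $\Theta$ sends a torsion pair $(\cT,\cF)$ in $\mod\sS$ to $(\cX,\cY)$ with $\cY = \{c \in \sC \mid Fc \in \cT\}$ and $\cX = \orth(\Sigma\cY) \cap \sC$. Both have been shown to restrict to the classes named in the theorem, so the bijection will follow as soon as I verify that $\Phi$ and $\Theta$ are mutually inverse.

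For $\Phi \circ \Theta = \mathrm{id}$, start from a functorially finite torsion pair $(\cT,\cF)$ and set $(\cX,\cY) = \Theta(\cT,\cF)$. Since a torsion pair is determined by its torsion class, it suffices to show $F\cY = \cT$. The inclusion $F\cY \subseteq \cT$ is immediate from the definition of $\cY$, and the reverse inclusion uses essential surjectivity of the equivalence $F \colon \sC/\Sigma\sS \to \mod\sS$: every $t \in \cT$ is of the form $Fc$ for some $c \in \sC$, and then $c \in \cY$ by definition.

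For $\Theta \circ \Phi = \mathrm{id}$, let $(\cX,\cY)$ be a complete cotorsion pair and write $(\cX^*,\cY^*) = \Theta\Phi(\cX,\cY)$, so $\cY^* = \{c \in \sC \mid Fc \in F\cY\}$. The inclusion $\cY \subseteq \cY^*$ is trivial. The main claim is $\cY^* \subseteq \cY$. Given $c \in \cY^*$, the isomorphism $Fc \simeq Fy$ with $y \in \cY$ lifts, via the equivalence $F \colon \sC/\Sigma\sS \to \mod\sS$, to morphisms $\phi \colon c \to y$ and $\psi \colon y \to c$ in $\sC$ such that $\psi\phi - \mathrm{id}_c$ and $\phi\psi - \mathrm{id}_y$ each factor through an object of $\Sigma\sS$. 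I will show $\sT(x,\Sigma c) = 0$ for every $x \in \cX$: given $h \colon x \to \Sigma c$, the composite $\Sigma\phi \circ h$ lies in $\sT(\cX,\Sigma\cY) = 0$, so $\Sigma\psi \circ \Sigma\phi \circ h = 0$. But this composite equals $h + \Sigma(\psi\phi - \mathrm{id}_c)\circ h$, and the correction term factors through an object of $\Sigma^2\sS$. Since $\cX \subseteq \sS*\Sigma\sS$ and $\sS$ is presilting, $\sT(\cX, \Sigma^2\sS) = 0$, forcing the correction to vanish and hence $h = 0$. This gives $\cY = \cY^*$, and the equality $\cX = \cX^*$ then follows from the orthogonality axiom of a cotorsion pair applied in $\sC$.

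The main obstacle is precisely the inclusion $\cY^* \subseteq \cY$: the equivalence $F$ only detects $c$ and $y$ up to $\Sigma\sS$, so an isomorphism in $\mod\sS$ lifts to a pair of morphisms in $\sC$ that are mutually inverse only modulo $\Sigma\sS$, strictly weaker than isomorphism in $\sC$. The cotorsion-pair orthogonality and the presilting condition must be combined to upgrade this weaker identification into genuine membership of $c$ in $\cY$.
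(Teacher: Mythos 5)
Your proposal is correct, and its global structure coincides with the paper's: both reduce the theorem to checking that $\Phi$ and $\Theta$ are mutually inverse, dispose of $\Phi\Theta=\mathrm{id}$ via essential surjectivity of $F$ and the fact that a torsion pair is determined by its torsion class, and isolate the inclusion $\cY^*\subseteq\cY$ as the only substantive point, settling it by proving $\bE(\cX,c)=\sT(\cX,\Sigma c)=0$ and invoking the cotorsion-pair axiom. Where you differ is in how that vanishing is obtained. The paper reuses the cone argument from the proof of Proposition~\ref{prop:cotorsion-to-torsion}: it lifts the epimorphism $Fy\to Fy'$ to a morphism $y\to y'$ in $\sC$, completes to a triangle $c\to y\to y'\to\Sigma c$, checks that $c\in\sS*\Sigma\sS$, and then applies $\sT(\cX,-)$. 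You instead lift the isomorphism $Fc\simeq Fy$ to a pair of morphisms that are mutually inverse modulo $\Sigma\sS$ and run a direct computation: for $h\colon x\to\Sigma c$ the composite $\Sigma\phi\circ h$ dies by Ext-orthogonality, and the correction term $\Sigma(\psi\phi-\mathrm{id}_c)\circ h$ dies because it factors through $\Sigma^2\sS$ and $\sT(\sS*\Sigma\sS,\Sigma^2\sS)=0$ for a presilting $\sS$. Your route is self-contained and avoids both the cone construction and the verification that the cone lies in $\sC$, at the cost of handling the ``isomorphism only up to $\Sigma\sS$'' issue explicitly; the paper's route gets the quotient-closure argument for free since it was already needed for Proposition~\ref{prop:cotorsion-to-torsion}. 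Both rest on the same two vanishing facts, and your use of them is sound.
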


\begin{proof}
We show that the maps $\Phi$ and $\Theta$ defined in Propositions~\ref{prop:cotorsion-to-torsion} and \ref{prop:torsion-to-cotorsion} are mutually inverse. Let $(\cX,\cY)$ be a complete cotorsion pair in $\sC$. 
%and recall that $\cX = {}\orth(\Sigma \cY) \cap \sC$ and $\cY = (\Sigma^{-1} \cX)\orth \cap \sC$ by Remark~\ref{rem:cotorsion}. 
By Proposition~\ref{prop:cotorsion-to-torsion} we have $\Phi \big( (\cX,\cY) \big) = (\cT,\cF)$, where $\cT = F\cY$ and $\cF = \cT\orth$ in $\mod \sS$. 
Applying $\Theta$ to $(\cT,\cF)$ produces a complete cotorsion pair $(\cX',\cY')$ in which $\cY' = \{ c \in \sC \mid Fc \in \cT\}$.
Clearly, $\cY \subseteq \cY'$.
To see that $\cY' \subseteq \cY$ take $y' \in \cY'$ and observe that there is an isomorphism $\phi \colon Fy \to Fy'$ in $\mod \sS$ for some $y\in \cY$ by the definition of $\cT$. 
Now, applying the same argument used to show that $F\cY$ is closed under quotients in the proof of Proposition~\ref{prop:cotorsion-to-torsion}, shows that $\sT(\cX,\Sigma y') = 0$, whence by completeness of the cotorsion pair $(\cX,\cY)$ we get $y' \in \cY$.
The equality $\Phi \Theta = 1$ follows from $F(F^{-1}(\cT))=\cT$.
\end{proof}

%===============================================================
% Bibliography
%===============================================================

\end{document}